\newtheorem{thm}{Theorem}[section]
\newtheorem{def.}{Definition}[section]
\newtheorem{prop}{Proposition}[section]
\newtheorem{cor}{Corollary}[section]
\newtheorem{lem}{Lemma}[section]
\numberwithin{table}{section}
\begin{document}

\title{The Minimization of the Number of Colors is Different at p=11}
\author{Pedro Lopes\\
        Center for Mathematical Analysis, Geometry, and Dynamical Systems\\
        Department of Mathematics\\
        Instituto Superior T\'ecnico, University  of Lisbon\\
        Av. Rovisco Pais\\
        1049-001 Lisbon\\
        Portugal\\
        \texttt{pelopes@math.tecnico.ulisboa.pt}\\
}
\date{April 08, 2015}
\maketitle

\begin{abstract}
In this article we present the following new fact for prime $p=11$. For knots $6_2$ and $7_2$, $mincol_{11}\, 6_2 = 5 = mincol_{11}\, 7_2$, along with the following feature. There is a pair of diagrams, one for $6_2$ and the other one for $7_2$, each of them admitting only non-trivial $11$-colorings using $5$ colors, but neither of them admitting being colored with the sets of $5$ colors that color the other one.  This is in full contrast with the behavior exhibited by links admitting non-trivial $p$-colorings over the smaller primes, $p=2, 3, 5$ or $7$.

We also prove results concerning obstructions to the minimization of colors over generic odd moduli. We apply these to find the right colors to eliminate from non-trivial colorings. We thus prove that $5$ is the minimum number of colors for each knot of prime determinant $11$ or $13$ from Rolfsen's table.
\end{abstract}

\bigbreak

Keywords: links, colorings, equivalence classes of colorings, sufficient sets of colors, minimal sufficient sets of colors, universal minimal sufficient sets of colors.

\bigbreak

MSC 2010: 57M27

\bigbreak

\section{Introduction}

\noindent

In this article we present the following new fact for prime $p=11$. There are two knots, $6_2$ and $7_2$, with $mincol_{11}\, 6_2 = 5 = mincol_{11}\, 7_2$, along with the following feature. There is a pair of diagrams, one for $6_2$ and the other one for $7_2$, each of them admitting only non-trivial $11$-colorings using $5$ colors, but neither of them admitting being colored with the sets of $5$ colors that color the other one (see Figures \ref{fig:6-2} and \ref{fig:7-2}). This is in full contrast with the behavior exhibited by links admitting non-trivial $p$-colorings over the smaller primes, $p=2, 3, 5$ or $7$ and leads to the main result of this article, Theorem \ref{thm:nounivered}.

This article is organized as follows. In Section \ref{sect:back} the background material is introduced. In Section \ref{sect:remarkable} we prove the fact referred to above. This suggests new objects and terminology which we develop in Subsection \ref{subsect:new}. In Section \ref{sect:obst} we prove facts pertaining to obstructions to non-trivial colorings. In Section \ref{sect:11-13} we apply these facts to minimize the number of colors of knots of determinant $11$ and $13$ found in Rolfsen's tables (\cite{Rolfsen}) along with $T(2, 11)$ and $T(2, 13)$. We thus prove that for all these knots but $T(2, 13)$, $5$ is the minimum number of colors modulo the determinant of the knot at issue. These results are collected in Tables \ref{Ta:mincolsub11} and \ref{Ta:mincolsub13} (the new terminology used in these Tables is defined in Subsection \ref{subsect:new}). We conclude the article with a discussion of future work in Section \ref{sect:future}.
\begin{table}[h!]
\begin{center}
\begin{tabular}{| c ||   c |  c |}\hline
$L$                      & $mincol_{11} L$ & $11$-minimal sufficient set of colors for $L$ \\ \hline \hline
$6_2$ $\ast$             & $5$            &  $\{ 0, 2, 3, 4, 8 \}$ \quad (Fig. \ref{fig:6-2})     \\ \hline
$7_2$        & $5$            & $\{ 0, 3, 4, 5, 6 \}$ \quad (Fig. \ref{fig:7-2})  \\ \hline
$10_{125}$ $\ast$        & $5$           & $\{ 0, 1, 5, 8, 10 \}$ \quad (Figs. \ref{fig:10-125}, \ref{fig:10-125cont}, and \ref{fig:10-125contcont})\\ \hline
$10_{128}$ $\ast$        & $5$           & $\{ 2, 3, 4, 6, 9 \}$ \quad (Figs. \ref{fig:10-128}, \ref{fig:10-128concl})\\ \hline
$10_{152}$ $\ast$        & $5$           & $\{ 0, 2, 3, 4, 8 \}$  \quad (Fig. \ref{fig:10-152})\\ \hline
$T(2, 11)$   & $5$             & $\{ 0, 1, 2, 3, 6 \}$ \quad (\cite{klgame}, Fig. $3$)\\ \hline
\end{tabular}
\caption{$mincol_{11}L$ for the  knots of prime determinant $11$  from Rolfsen's tables (\cite{Rolfsen}) and for $T(2, 11)$ (\cite{klgame}).  Each pair (knot with a $\ast$, a knot without a $\ast$) presents the following feature. There is a pair of diagrams, one for each knot in the pair of knots, such that  each diagram admits non-trivial $11$-colorings with $5$ colors, but none of these diagrams is colored with the $5$ colors that color the other diagram. Below we will subsume this by saying they do not admit the same $11$-minimal sufficient set of colors (see Definition \ref{def:minsuf}).}
\label{Ta:mincolsub11}
\end{center}
\begin{center}
\begin{tabular}{| c ||   c | c | }\hline
$L$         &  $mincol_{13} L$ &  $13$-minimal sufficient set of colors for $L$ \\ \hline \hline
$6_3$       &   $5$            &  $\{ 0, 1, 2, 6, 11 \}$\quad (Fig. \ref{fig:6-3}) \\ \hline
$7_3$       &   $5$            &  $\{ 0, 2, 3, 4, 9 \}$\quad (Figs. \ref{fig:7-3} and \ref{fig:7-3concl})   \\ \hline
$8_1$       &   $5$       &  $\{ 0, 1, 3, 8, 12 \}$ \quad (Fig. \ref{fig:8-1new})\\ \hline
$9_{43}$    &   $5$       &  $\{ 0, 1, 3, 8, 12 \}$  \quad (Figs. \ref{fig:9-43} through \ref{fig:9-43IIIv2})\\ \hline
$10_{154}$  &   $5$            &  $\{ 0, 3, 4, 5, 10 \}$\quad (Figs. \ref{fig:10-154} and \ref{fig:10-154cont}) \\ \hline
$T(2, 13)$  &   $\leq 6$       &  $\subseteq \{ 0, 1, 2, 3, 4, 7 \}$ {\bf?} \quad (\cite{klgame}, Figs. $4$, $5$, and $6$)  \\ \hline
\end{tabular}
\caption{$mincol_{13}L$ for the  knots of prime determinant $13$  from Rolfsen's tables (\cite{Rolfsen}) and for $T(2, 13)$ (\cite{klgame}). At the time of writing it was not possible to ascertain if  $mincol_{13}\, T(2, 13) = 5$. Modulo $13$, each set of $5$ colors which colors non-trivially a knot diagram, will also color non-trivially any other diagram admitting a non-trivial $13$-coloring using $5$ colors, see \cite{Ge5} and \cite{nakamuranakanishisatoh}.}
\label{Ta:mincolsub13}
\end{center}
\end{table}

\bigbreak
{\bf Remark} A boxed integer at the top left of a Figure indicates the modulus with respect to which the colorings in the Figure are being considered.

\bigbreak

\subsection{Acknowledgements}\label{subsect:orgackn}

\noindent

The author thanks Louis H. Kauffman and Slavik Jablan for remarks and suggestions.

The author acknowledges support from FCT (Funda\c c\~ao para a Ci\^encia e a Tecnologia), Portugal, through project number PTDC/MAT/101503/2008, ``New Geometry and Topology''.

\section{Background Material}\label{sect:back}

\noindent

We begin by recalling the definition of a (Fox) coloring of a link along with related objects and results. A finer analysis is found in \cite{GJKL}. Consider a link along with one of its diagrams. Regard the arcs of the diagram as algebraic variables and at each crossing read off the {\bf coloring condition}: twice the ``over-arc'' minus the ``under-arcs'' equals zero, see Figure \ref{fig:xtop}.


\begin{figure}[!ht]
	\psfrag{ai}{\Huge$a$}
	\psfrag{ai+1}{\Huge$c$}
	\psfrag{aji}{\Huge$b$}
	\psfrag{eq}{\Huge$c=2b-a$}
	\centerline{\scalebox{.33}{\includegraphics{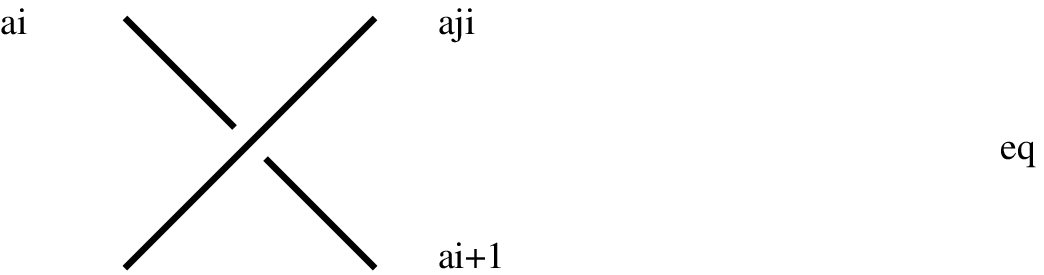}}}
	\caption{The coloring condition (in an equivalent form), $c=2b-a$, relating the arcs that meet at a crossing.}\label{fig:xtop}
\end{figure}
Consider the system of linear homogeneous equations that we so associate to the link diagram. The matrix of the coefficients of this system of equations is called the \emph{coloring matrix of the diagram}. The determinant of the coloring matrix is zero since along each row the non-null entries are one $2$ (the ``over-arc contribution'') and two $-1$'s (the ``under-arc contributions''). Thus there are always solutions to this system of equations for which any two variables take on the same value; these are called the trivial colorings. The other solutions are called non-trivial colorings.

In \cite{GJKL}, Ge, Jablan, Kauffman, and the author proved that, for each odd prime modulus $p$, there is an equivalence relation among the set of non-trivial $p$-colorings of a diagram, and that the number of these equivalence classes constitute a topological invariant for the link at stake. This equivalence relation is brought about by the so-called $p$-coloring automorphisms. Here are the relevant definitions and results.

\begin{def.}\label{def:colauto}$(\cite{elhamdadi})$
Let $p$ be an odd prime. A permutation, $f$, of the integers mod $p$ i.e., of the set $\mathbf{Z}/p\mathbf{Z}$,  is called a $(p$-$)${\it coloring automorphism} if for any two integers $a$ and $b$, the coloring condition is preserved i.e.,
\[
f(a\ast b) = f(a)\ast f(b) \qquad \text{ mod } p
\]
where $a\ast b:= 2b-a$.
\end{def.}

\begin{prop}\label{prop:colautopreservecol}
Coloring automorphisms preserve colorings.

Specifically, let $D$ be a link diagram with arcs $a_1, \dots, a_n$ as we travel along the diagram. Let $f$ be a $p$-coloring automorphism. If $D$ admits a $p$-coloring such that for each $i$, arc $a_i$ receives color $c_i$, then the assignment ``arc $a_i$ receives color $f(c_i)$'', for each $i$, is also a $p$-coloring of $D$.
\end{prop}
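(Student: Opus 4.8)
The plan is to verify that the new assignment ``arc $a_i$ receives color $f(c_i)$'' satisfies the coloring condition at every crossing, since that is precisely what it means to be a $p$-coloring of $D$. Being a coloring is a purely local, per-crossing requirement, so it suffices to fix an arbitrary crossing and check the condition there; the global conclusion then follows because the crossing is arbitrary. First I would set up notation at a generic crossing. Following the convention of Figure \ref{fig:xtop}, each crossing involves one over-arc and two under-arcs; writing $b$ for the color that the given coloring assigns to the over-arc and $a, c$ for the colors it assigns to the two under-arcs, the hypothesis that this is a $p$-coloring of $D$ says exactly that $c = 2b - a \pmod{p}$, i.e. $c = a \ast b$ in the notation $a \ast b := 2b - a$.

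Next I would apply the automorphism $f$ to this crossing equation. Applying $f$ to both sides of $c = a \ast b$ and invoking the defining property of a $p$-coloring automorphism from Definition \ref{def:colauto}, namely $f(a \ast b) = f(a) \ast f(b)$, yields $f(c) = f(a) \ast f(b) = 2 f(b) - f(a) \pmod{p}$. This last equation is precisely the coloring condition for the new assignment at the crossing under consideration, since the over-arc now carries the color $f(b)$ and the two under-arcs carry $f(a)$ and $f(c)$. As the crossing was arbitrary, the coloring condition is satisfied at every crossing, and therefore the assignment ``arc $a_i$ receives color $f(c_i)$'' is indeed a $p$-coloring of $D$.

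I do not expect a genuine obstacle, as the statement amounts to the assertion that $f$ intertwines the local coloring relation with itself, which is almost a restatement of the definition of a coloring automorphism; the only point demanding care is bookkeeping. Because the operation $\ast$ is not symmetric in its two arguments—the over-arc plays a distinguished role—I would be careful at each crossing to keep track of which arc is the over-arc and which are the under-arcs, so that the automorphism identity $f(a \ast b) = f(a) \ast f(b)$ is applied with its operands in the correct order. I would also remark that, since $f$ is a permutation of $\mathbf{Z}/p\mathbf{Z}$, the map ``arc $a_i$ receives color $f(c_i)$'' is a well-defined assignment of colors to arcs (and, being induced by a bijection, it sends distinct colors to distinct colors, so non-triviality is preserved as well, though this is not required by the statement).
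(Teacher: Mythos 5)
Your proposal is correct and follows essentially the same route as the paper: at each crossing you apply the defining identity $f(a\ast b)=f(a)\ast f(b)$ to the coloring condition $c=2b-a$ and conclude that the transformed assignment satisfies the condition there, the paper merely phrasing this with its running indices $c_{i+1}=2c_{j_i}-c_i$ rather than at a generic crossing. Your added remarks on well-definedness and preservation of non-triviality are harmless extras not needed for the statement.
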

\begin{proof}
We keep the notation and terminology of the statement with the following addition. We let $a_{j_i}\in \{ a_1, \dots, a_n \}$ denote the over-arc at the crossing where arcs $a_i$ and $a_{i+1}$ meet.

Since the assignment ``arc $a_i$ receives color $c_i$'', for each $i$, is a $p$-coloring, then $c_{i+1}=2c_{j_i}-c_i$ (mod $p$) for each $i$. Thus
\[
f(c_{i+1})=f(2c_{j_i}-c_{i})=2f(c_{j_i})-f(c_{i}) \qquad (\text{mod } p)
\]
for each $i$ i.e., the assignment arc ``$a_i$ receives color $f(c_i)$'' for each $i$ is also a $p$-coloring of $D$. This completes the proof.
\end{proof}

\begin{prop}\label{prop:colautoform} $(\cite{elhamdadi})$
Given an odd prime $p$, a $p$-coloring automorphism is of the form:
\[
f_{\lambda, \mu}(x)=\lambda x +\mu
\]
where $\lambda $ is any  element from the set of units of the integers mod $p$ and $\mu$ is any integer mod $p$. Furthermore, the set of $p$-coloring automorphisms equipped with composition of functions for binary operation constitutes a group.
\end{prop}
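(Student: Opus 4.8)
The plan is to prove Proposition~\ref{prop:colautoform}, which characterizes the $p$-coloring automorphisms as exactly the affine maps $f_{\lambda,\mu}(x)=\lambda x+\mu$ with $\lambda$ a unit and $\mu$ arbitrary, and shows they form a group. I would split the argument into three parts: first, verifying that every such affine map is a coloring automorphism; second, proving that these are the \emph{only} coloring automorphisms; and third, checking the group structure.

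For the first part, I would take an arbitrary affine map $f_{\lambda,\mu}(x)=\lambda x+\mu$ with $\lambda$ invertible mod $p$ and simply compute that it respects the operation $a\ast b=2b-a$. Explicitly, I would show $f_{\lambda,\mu}(a\ast b)=\lambda(2b-a)+\mu$ while $f_{\lambda,\mu}(a)\ast f_{\lambda,\mu}(b)=2(\lambda b+\mu)-(\lambda a+\mu)=\lambda(2b-a)+\mu$, so the two agree mod~$p$. I would also note that invertibility of $\lambda$ guarantees $f_{\lambda,\mu}$ is a genuine permutation of $\mathbf{Z}/p\mathbf{Z}$ (its inverse being $x\mapsto \lambda^{-1}(x-\mu)$), which is required by Definition~\ref{def:colauto}. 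This direction is routine.

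The converse is where the real content lies, and I expect it to be the main obstacle. Here I would start from an arbitrary coloring automorphism $f$, i.e.\ a permutation satisfying $f(2b-a)=2f(b)-f(a)$ for all $a,b$, and deduce its affine form. The key idea is to set $\mu:=f(0)$ and study $g(x):=f(x)-\mu$, so that $g(0)=0$ and $g$ inherits the same functional equation. Taking $a=0$ in the identity gives $g(2b)=2g(b)$, and more generally specializing the two variables lets one show $g$ is additive, $g(x+y)=g(x)+g(y)$; combined with $g(0)=0$ this forces $g(x)=\lambda x$ where $\lambda:=g(1)=f(1)-f(0)$. Since $f$ is a bijection, $g$ is a bijection fixing $0$, which forces $\lambda$ to be a unit mod~$p$ (a linear map $x\mapsto\lambda x$ on $\mathbf{Z}/p\mathbf{Z}$ is bijective iff $\gcd(\lambda,p)=1$). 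The delicate step is extracting genuine additivity from the coloring identity: I would carefully choose substitutions (for instance exploiting that over $\mathbf{Z}/p\mathbf{Z}$ with $p$ odd the element $2$ is invertible, so one can solve $2b-a$ for prescribed values) to generate enough relations, and then use $g(0)=0$ together with $g(2b)=2g(b)$ to bootstrap up to full additivity.

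Finally, for the group structure I would verify closure, associativity, identity, and inverses directly within the affine family. Composition gives $f_{\lambda_1,\mu_1}\circ f_{\lambda_2,\mu_2}=f_{\lambda_1\lambda_2,\,\lambda_1\mu_2+\mu_1}$, and since $\lambda_1\lambda_2$ is again a unit mod~$p$ (the units form a multiplicative group) the family is closed; associativity is inherited from composition of functions; the identity is $f_{1,0}$; and the inverse of $f_{\lambda,\mu}$ is $f_{\lambda^{-1},\,-\lambda^{-1}\mu}$, which is well-defined precisely because $\lambda$ is invertible. Assembling these three parts completes the proof.
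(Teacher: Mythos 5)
Your proposal is correct, but it takes a genuinely different route from the paper for the simple reason that the paper does not prove this proposition at all: its ``proof'' is a one-line deferral to the cited reference \cite{elhamdadi}. You instead supply a self-contained argument, and the only step where care is needed --- extracting additivity of $g(x)=f(x)-f(0)$ from the inherited identity $g(2b-a)=2g(b)-g(a)$ --- does go through along the lines you indicate: setting $b=0$ gives $g(-a)=-g(a)$, setting $a=0$ gives $g(2b)=2g(b)$, and then for arbitrary $u,v$ the choice $b=2^{-1}u$, $a=-v$ (legitimate since $2$ is invertible for odd $p$) yields $g(u+v)=2g(2^{-1}u)-g(-v)=g(u)+g(v)$. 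Since every additive self-map of $\mathbf{Z}/p\mathbf{Z}$ is $x\mapsto g(1)\,x$ and bijectivity forces $\lambda=g(1)$ to be a unit, the converse direction closes; the forward direction and the group axioms are the routine computations you describe (your composition formula $f_{\lambda_1,\mu_1}\circ f_{\lambda_2,\mu_2}=f_{\lambda_1\lambda_2,\,\lambda_1\mu_2+\mu_1}$ and inverse $f_{\lambda^{-1},-\lambda^{-1}\mu}$ are both right). What your approach buys is independence from the external reference; what the paper's citation buys is brevity and correct attribution, since the result is not original to this article.
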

\begin{proof}The proof is found in \cite{elhamdadi}.
\end{proof}

\begin{def.}\label{def:equivclasscol}$(\cite{GJKL})$ Let $p$ be a prime. Let $L$ be a link admitting non-trivial $p$-colorings and $D$ be one of its diagrams. Let $\cal C$ and $\cal C'$ be non-trivial $p$-colorings of $D$ i.e., $\cal C$ and $\cal C'$ are mappings from the arcs of $D$ to $\mathbf{Z}/p\mathbf{Z}$ such that if $a$, $b$, and $c$ are the arcs as in Figure \ref{fig:xtop} then $2{\cal C} (b) -{\cal C}(a)={\cal C}(c)$ and $2{\cal C'} (b) -{\cal C'}(a)={\cal C'}(c)$ $($note that here we let $a, b,$ and $c$ stand for the arcs$)$.
Then $\cal C$ is related to $\cal C'$ if, by definition, there is a $p$-coloring automorphism, $f$, such that,
\[
{\cal C'} (a) = f({\cal C}(a))
\]
for any arc $a$ of the diagram $D$.
\end{def.}

\begin{thm}\label{thm:preliminaries} Let $L$ be a link and $D$ one of its diagrams.
\begin{enumerate}
\item The equivalence class of the coloring matrix of $D$ under elementary transformations of matrices is a topological invariant of $L$.
\item The Smith Normal Form from this equivalence class, call it $SNF_L$, possesses at least one $0$ along its diagonal $($stemming from the determinant of this matrix being $0 )$; the product of the diagonal entries of $SNF_L$ except the $0$ referred to above, is a topological invariant of $L$ called the \emph{determinant of $L$}, notation $\det L$.
\bigbreak
Assume further that $p$ is prime.
\bigbreak
\item Let $n_p(L)$ be the number of $0$'s, mod $p$, along the diagonal of $SNF_L$. If $n_p(L) \geq 2$ $($or, equivalently, if $p$ divides $\det L )$ then $L$ admits non-trivial colorings modulo $p$ $($also known as non-trivial $p$-colorings$)$. $n_p(L)$ is a topological invariant for $L$.
\item The relation set forth in Definition \ref{def:equivclasscol} is an equivalence relation among the set of non-trivial $p$-colorings on $D$. Assuming $n_p(L) \geq 2$, there are $\frac{p^{n_p(L)-1}-1}{p-1}$ equivalence classes of non-trivial $p$-colorings on $D$. This number is a topological invariant for $L$ i.e., no matter which diagram of $L$ is used, it has $\frac{p^{n_p(L)-1}-1}{p-1}$ equivalence classes of non-trivial $p$-colorings associated to it.
\end{enumerate}
\end{thm}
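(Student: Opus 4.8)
The plan is to treat the four assertions in sequence, since each later part rests on the well-definedness established earlier. The heart of the argument, and the part I expect to be most laborious, is the first: showing that the coloring matrix of $D$, viewed up to elementary transformations, depends only on $L$. By Reidemeister's theorem it suffices to check that each of the three Reidemeister moves alters the coloring matrix only by elementary row and column operations together with the stabilization operations that append or delete a row and column meeting the diagonal in a single $1$ (these come from the creation or destruction of an arc and a crossing). Concretely, I would set up a bijection between the arcs before and after the move, track which crossing relation $c = 2b - a$ is added, removed, or modified, and verify by direct inspection that the new relation can be used to eliminate the new variable, leaving the old matrix back up to the allowed operations. The first Reidemeister move introduces one arc and one crossing whose relation is degenerate and can be cleared; the second introduces two of each and reduces similarly; the third permutes and recombines three relations without changing the arc count, so it should amount to a sequence of row operations. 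This case analysis is routine but must be done carefully, as the bookkeeping of over- and under-arcs (the $a_{j_i}$ of Proposition \ref{prop:colautopreservecol}) is where sign and indexing errors hide.

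Granting Part 1, Parts 2 and 3 are essentially linear algebra over $\mathbf{Z}$ and over $\mathbf{Z}/p\mathbf{Z}$. The Smith Normal Form is a canonical representative of the elementary-equivalence class, so by Part 1 its diagonal (the invariant factors) is an invariant of $L$; in particular at least one diagonal entry is $0$, because every row of the coloring matrix has entries summing to $2-1-1=0$, so the all-ones vector lies in the kernel and the determinant vanishes. The product of the remaining diagonal entries is then well defined and is declared $\det L$; stabilizations only append factors equal to $1$, so they do not affect this product. For Part 3 I would pass to $\mathbf{Z}/p\mathbf{Z}$: the number of zero diagonal entries modulo $p$ equals the $\mathbf{Z}/p\mathbf{Z}$-dimension of the kernel of the coloring matrix, which is the dimension of the space of $p$-colorings; call it $n_p(L)$. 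The constant (trivial) colorings form a one-dimensional subspace, so $n_p(L) \geq 2$ forces the existence of colorings outside it, i.e. non-trivial $p$-colorings. The equivalence with $p \mid \det L$ follows because reducing an invariant factor modulo $p$ produces an extra zero on the diagonal exactly when $p$ divides that factor, and $n_p(L)$ is an invariant because the SNF is.

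For Part 4, the equivalence-relation claim is immediate from Proposition \ref{prop:colautoform}: the coloring automorphisms form a group, so reflexivity comes from the identity $f_{1,0}$, symmetry from inverses, and transitivity from composition, while Proposition \ref{prop:colautopreservecol} guarantees that applying an automorphism sends colorings to colorings. To count the classes I would view the space $V$ of $p$-colorings as $(\mathbf{Z}/p\mathbf{Z})^{n_p(L)}$, with the trivial colorings forming the line $T = \{ c\,\mathbf{1} \}$, and I would let $f_{\lambda,\mu}$ act by $\mathcal{C} \mapsto \lambda \mathcal{C} + \mu\,\mathbf{1}$. Passing to the quotient $V/T$ of dimension $n_p(L)-1$, this action becomes $\bar{\mathcal{C}} \mapsto \lambda\,\bar{\mathcal{C}}$, and a short computation shows that two non-trivial colorings are equivalent precisely when their images in $V/T$ are nonzero scalar multiples of one another. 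Hence the equivalence classes of non-trivial $p$-colorings are in bijection with the points of the projective space $\mathbb{P}\big( V/T \big) \cong \mathbb{P}^{\,n_p(L)-2}(\mathbf{Z}/p\mathbf{Z})$, of which there are $\frac{p^{\,n_p(L)-1}-1}{p-1}$; topological invariance of this number then follows from that of $n_p(L)$.

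The genuine obstacle is the Reidemeister-move verification in Part 1; once the coloring matrix is known to be a link invariant up to elementary equivalence, everything else reduces to the structure of the invariant factors and the affine action of the group $\{ f_{\lambda,\mu} \}$, both of which are standard.
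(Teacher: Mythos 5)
Your proposal is correct and follows the standard argument: the paper itself offers no proof of this theorem, deferring entirely to the cited reference \cite{GJKL}, and what you outline (Reidemeister invariance of the coloring matrix up to elementary operations and stabilization, the invariant factors giving $\det L$ and $n_p(L)$, and the identification of equivalence classes of non-trivial $p$-colorings with the points of $\mathbb{P}^{\,n_p(L)-2}(\mathbf{Z}/p\mathbf{Z})$ via the affine action of the $f_{\lambda,\mu}$) is precisely that reference's approach. The only step you rightly flag as needing real care, the crossing-by-crossing bookkeeping for the three Reidemeister moves, is routine and your reduction of everything else to it is sound.
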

\begin{proof} See \cite{GJKL}.
\end{proof}

\begin{def.}\label{def:pnullitylink}$[ p$-nullity of a link $]$
Keeping the notation from Theorem \ref{thm:preliminaries}, $n_p(L)$ is called the $p$-nullity of $L$.

We remark that $n_p(L) \geq 1$.

If a link $L$ has prime determinant $p$, then $n_p(L) = 2$ which further implies that $\frac{p^{n_p(L)-1}-1}{p-1} = 1$ i.e., such a link only has $1$ equivalence class of non-trivial $p$-colorings.
\end{def.}

Statement $4$. in Theorem \ref{thm:preliminaries} is the reason why we do not stop extracting information from the coloring matrix at the determinant of the link under consideration (this determinant can be obtained without resorting to the SNF).

For instance, $\det\,  \big( 9_{49}\big) = 25 = \det\,  \big( 10_3 \big) $, but $n_5\,  \big( 9_{49} \big)  = 3 \neq 2 = n_5\,  \big( 10_3 \big) $.

In fact the determinant of the link tells us if a modulus will bring about non-trivial colorings for this link. But if prime $p$ divides the determinant of the link, the $p$-nullity of the link tells us further into how many equivalence classes the non-trivial $p$-colorings of the link split into. And this will be crucial for the argument of our main result here. The pair of knots on which we build our argument, $6_2$ and $7_2$, both have prime determinant, $11$, which means there is only one equivalence class of non-trivial $11$-colorings for each of these knots.

\begin{cor}\label{cor:preliminaries}
We keep the notation of Theorem \ref{thm:preliminaries} along with the terminology from Definition \ref{def:pnullitylink}. If $n_p(L) \geq 2$ then $L$  has $\frac{p^{n_p(L)-1}-1}{p-1}$ equivalence classes of $p$-colorings and the group of $p$-coloring automorphisms acts transitively on each of these equivalence classes. If the same diagram is supporting two $p$-colorings from a given equivalence class, then they use the same number of distinct colors, and one set of these colors is the image of the other one under a $p$-coloring automorphism.
\end{cor}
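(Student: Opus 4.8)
The plan is to recognize the equivalence relation of Definition \ref{def:equivclasscol} as the orbit equivalence for a group action, so that both transitivity and the statement about color sets become essentially formal. The count $\frac{p^{n_p(L)-1}-1}{p-1}$ of equivalence classes is already part 4 of Theorem \ref{thm:preliminaries}, so I would simply invoke it and devote the argument to the transitivity and color-set assertions.

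First I would make the group action precise. Let $G$ denote the group of $p$-coloring automorphisms from Proposition \ref{prop:colautoform}, and let $\mathcal{X}$ be the set of non-trivial $p$-colorings of a fixed diagram $D$ of $L$. Define an action of $G$ on $\mathcal{X}$ by post-composition: for $f\in G$ and a coloring $\mathcal{C}$, set $(f\cdot\mathcal{C})(a)=f(\mathcal{C}(a))$ for every arc $a$ of $D$. Three checks are needed. (i) $f\cdot\mathcal{C}$ is again a $p$-coloring; this is exactly Proposition \ref{prop:colautopreservecol}. (ii) $f\cdot\mathcal{C}$ is again \emph{non-trivial}: since each $f=f_{\lambda,\mu}$ is a bijection of $\mathbf{Z}/p\mathbf{Z}$ (with $\lambda$ a unit), it preserves the number of distinct colors used, and a coloring is trivial precisely when it uses a single color, so non-trivial colorings are carried to non-trivial colorings. (iii) The axioms of a group action hold: the identity automorphism $f_{1,0}$ fixes every coloring, and $(f\circ g)\cdot\mathcal{C}=f\cdot(g\cdot\mathcal{C})$ because the operation in $G$ is composition of functions (Proposition \ref{prop:colautoform}).

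With the action in place, I would observe that the relation of Definition \ref{def:equivclasscol} is precisely ``$\mathcal{C}$ and $\mathcal{C}'$ lie in the same $G$-orbit'': indeed it holds iff some $f\in G$ satisfies $\mathcal{C}'(a)=f(\mathcal{C}(a))$ for all arcs $a$, i.e. iff $\mathcal{C}'=f\cdot\mathcal{C}$. Hence the equivalence classes are exactly the orbits of the action, and $G$ acts transitively on each of them by the very definition of an orbit. This settles the transitivity claim.

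Finally, for the color-set statement, let $\mathcal{C}$ and $\mathcal{C}'$ lie in one equivalence class, so $\mathcal{C}'=f\cdot\mathcal{C}$ for some $f\in G$. Write $S=\{\mathcal{C}(a): a \text{ an arc of } D\}$ for the set of colors used by $\mathcal{C}$; then the set of colors used by $\mathcal{C}'$ is $\{f(\mathcal{C}(a)): a \text{ an arc}\}=f(S)$, the image of $S$ under the coloring automorphism $f$. Because $f$ is a bijection of $\mathbf{Z}/p\mathbf{Z}$, it restricts to a bijection $S\to f(S)$, whence $|S|=|f(S)|$; that is, the two colorings use the same number of distinct colors. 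This is exactly what is asserted. I do not anticipate a serious obstacle: the only point requiring care is verifying that post-composition is a bona fide action on the set of \emph{non-trivial} colorings (checks (i)--(iii) above), and this rests entirely on the already-established Propositions \ref{prop:colautopreservecol} and \ref{prop:colautoform}.
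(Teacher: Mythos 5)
Your argument is correct. Note, however, that the paper itself offers no proof of this corollary at all --- it simply writes ``See \cite{GJKL}'' --- so there is nothing in the text to compare against step by step. What you have done is supply the self-contained argument that the paper delegates to the reference: you realize the relation of Definition \ref{def:equivclasscol} as orbit equivalence for the post-composition action of the group of $p$-coloring automorphisms (using Proposition \ref{prop:colautopreservecol} for closure on colorings, the bijectivity of $f_{\lambda,\mu}$ for preservation of non-triviality, and Proposition \ref{prop:colautoform} for the group structure), after which transitivity on each class is tautological and the color-set claim follows from $f$ restricting to a bijection $S\to f(S)$. This is the natural and, as far as one can tell, intended argument; the only component you do not prove is the count $\frac{p^{n_p(L)-1}-1}{p-1}$, which you correctly import from statement 4 of Theorem \ref{thm:preliminaries} (itself only cited to \cite{GJKL} in the paper), so your proof is exactly as self-contained as the surrounding text permits. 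No gaps.
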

\begin{proof} See \cite{GJKL}.
\end{proof}

{\bf Remark} Split links (which have null determinant) are  non-trivially colored with two colors in any modulus. In general links of null determinant admit colorings in any modulus and we believe they deserve an independent study which we plan on doing in a separate article. For this reason all links referred to in this article are links of non-null determinant.

{\bf Remark} We are aware of the connection between the coloring matrix and the first homology group of the $2$-fold branched cover of $S^3$ along the link. We expect to explore this connection in future work.
\bigbreak

If there is a prime $p$ such that  the link $L$ under study has $p$-nullity $n_p(L) \geq 2$ then $L$ admits non-trivial $p$-colorings. In such circumstance we can look into the minimum number of colors it takes to assemble a non-trivial $p$-coloring of $L$.
\begin{def.}$($minimum number of colors; minimal coloring$)$\label{def:mincol}
Let $p$ be a prime, $L$ a link admitting non-trivial $p$-colorings, and $D$ a diagram of $L$. Let $c_{p, D}$ stand for the minimum number of colors it takes to assemble a non-trivial $p$-coloring on $D$. We denote
\[
mincol_p\, L :=min \{ \,  c_{p, D} \, \,  | \,\,  D \text{ is a diagram of } L \, \}
\]
and call it {\bf the minimum number of colors for $L$, modulo $p$}.
\bigbreak
We call {\bf minimal  (p$-$)coloring of $L$} any non-trivial $p$-coloring for a diagram of $L$ using $mincol_p\, L$ colors.
\end{def.}

This notion was first introduced in \cite{Frank}. We remark that the minimum number of colors is a link invariant. For each of the first primes ($p=2, 3, 5$, or $7$) the pattern of the minimum number of colors mod $p$ is very rigid. The precise mathematical content of this statement is Theorem \ref{thm:p<11} below. We let $n\mid m$ stand for ``integer $n$ divides integer $m$''.

\begin{thm}\label{thm:p<11}$\cite{kl, lm, Oshiro, Saito, satoh}$
Let $L$ be a link with $\det L \neq 0$.
\begin{enumerate}
\item If $2\mid \det L$ then $mincol_2\, L = 2$.

Any $2$ colors from $\{ 0, 1 \}$ $($mod $2)$ can be used to assemble a minimal $2$-coloring.
\item If $3\mid \det L$ then $mincol_3\, L = 3$.

Any $3$ colors from $\{ 0, 1, 2 \}$ $($mod $3)$ can be used to assemble a minimal $3$-coloring.
\item If $5\mid \det L$ then $mincol_5\, L = 4$.

Any $4$ colors from $\{ 0, 1, 2, 3, 4 \}$ $($mod $5)$ can be used to assemble a minimal $5$-coloring of $D$.
\item If $7\mid \det L$ then $mincol_7\, L = 4$.

Any $4$ colors from $\{ 0, 1, 2, 3, 4, 5, 6 \}$ $($mod $7)$, can be used to assemble a minimal $7$-coloring but the sets obtained from $\{ 0, 1, 2, 3  \}$ via a $7$-coloring automorphism.
\item Let $p$ be a prime greater than $7$. If $p\mid \det L$ then $mincol_p\, L \geq 5$.
\end{enumerate}
\end{thm}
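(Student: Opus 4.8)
The plan is to prove the contrapositive in spirit: fixing a prime $p>7$ with $p\mid\det L$, I will show that no diagram of $L$ admits a non-trivial $p$-coloring using four or fewer colors, so that $mincol_p\,L\geq 5$. The standing tool is the group of coloring automorphisms $f_{\lambda,\mu}(x)=\lambda x+\mu$ of Proposition~\ref{prop:colautoform}: by Proposition~\ref{prop:colautopreservecol} it carries colorings to colorings, and by Corollary~\ref{cor:preliminaries} it preserves the number of distinct colors used. Since this affine group acts sharply $2$-transitively on $\mathbf{Z}/p\mathbf{Z}$, I may send any two prescribed colors to any two prescribed values. The proof then divides into ruling out colorings that use exactly $2$, $3$, or $4$ colors.

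The cases of two and three colors are handled uniformly for all odd $p$. With only two colors $\{u,v\}$, the relation $a\ast b=2b-a$ at a crossing forces, in each possible pattern of the three incident arcs, $u=v$; hence the coloring is monochromatic, a contradiction. For three colors I first note that any crossing with distinct under-arcs $a\neq c$ and over-arc $b$ satisfies $b=(a+c)/2$ with $a,b,c$ pairwise distinct, i.e.\ the three colors form a $3$-term arithmetic progression; normalizing it, the color set is exactly $\{0,1,2\}$. Checking $x+z\equiv 2y\ (\mathrm{mod}\ p)$ for $x,y,z\in\{0,1,2\}$ and $p\geq 5$ shows the only color-changing crossings have over-arc $1$ and under-arcs $\{0,2\}$, and moreover that a $1$-colored arc can never pass under a $0$- or $2$-colored arc. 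Tracing components then forces each component to be monochromatic unless the $1$-colored sublink lies entirely over the remaining sublink; the latter makes $L$ split, so $\det L=0$, against hypothesis.

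The crux is the four-color case, and it is exactly here that $p>7$ is needed, since four colors do occur for $p=5$ and $p=7$. Assume a diagram of $L$ carries a non-trivial $p$-coloring with color set $S$, $|S|=4$. As before some color-changing crossing produces a $3$-term progression inside $S$, which I normalize to $\{0,1,2\}$, writing $S=\{0,1,2,\delta\}$ with $\delta\notin\{0,1,2\}$. I encode the local coloring condition as a graph $\Gamma$ on the vertex set $S$, joining $x$ and $z$ by an edge whenever $x+z\equiv 2y\ (\mathrm{mod}\ p)$ for some $y\in S$ --- that is, whenever $x$ and $z$ can be the two under-arcs of a crossing colored from $S$. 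Because consecutive arcs along a strand are precisely such a pair, a knot component realizing all of $S$ traces a closed walk in $\Gamma$ meeting every vertex, so $\Gamma$ must be connected.

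It remains to see that $\Gamma$ on $\{0,1,2,\delta\}$ is connected only for small $p$. The pair $\{0,2\}$ is always joined (their midpoint is $1\in S$), so connectivity forces both $1$ and $\delta$ to be attached to this pair. Each candidate edge pins $\delta$ to one explicit residue: $0\sim\delta$ gives $\delta=4$, $2\sim\delta$ gives $\delta\equiv-2$, $1\sim\delta$ gives $\delta\in\{3,-1\}$, $0\sim 1$ gives $\delta\equiv 2^{-1}$, and $1\sim 2$ gives $\delta\equiv 3\cdot 2^{-1}$. Attaching both $1$ and $\delta$ therefore requires two of these conditions to hold for a single $\delta$, and a finite comparison of the resulting equations shows this occurs only when $p\in\{3,5,7\}$. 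Hence for every prime $p>7$ the graph $\Gamma$ is disconnected, no component can realize four colors, and---dealing with several components by the same splitting argument used in the three-color case, which is where $\det L\neq 0$ re-enters---we conclude $mincol_p\,L\geq 5$. I expect the main obstacle to be this closing enumeration together with the passage from knots to links: one must verify that none of the coincidences among the edge-equations yields a connected $\Gamma$ beyond $p=7$, and must argue carefully that a disconnected palette cannot be rescued by inter-component crossings without splitting the link.
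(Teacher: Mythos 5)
Your proposal addresses only statement 5; it says nothing about statements 1--4, and in particular nothing about the color-choice claims in parts 3 and 4 (``any $4$ colors mod $5$'', ``any $4$ colors mod $7$ except the images of $\{0,1,2,3\}$''), which is precisely the portion the paper treats as its original contribution, proving it via Lemma \ref{lem:012} and Corollary \ref{cor:lem012}; the paper simply cites the literature for everything you set out to prove. That said, your part-5 argument is a genuine (and essentially correct for knots) reconstruction of the pallet-graph approach of Nakamura--Nakanishi--Satoh: the enumeration of the edges of $\Gamma$ on $\{0,1,2,\delta\}$ is right, and the conclusion that $\Gamma$ is connected only for $p\in\{3,5,7\}$ checks out.

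The genuine gap is exactly the point you flag at the end: the multi-component case cannot be closed by ``the same splitting argument used in the three-color case.'' Concretely, take $p\geq 11$ and $S=\{0,1,2,3\}$, so that $\Gamma$ decomposes as $\{0,2\}\sqcup\{1,3\}$. A sublink $A$ colored within $\{0,2\}$ may pass under arcs of the sublink $B$ colored within $\{1,3\}$ whenever the over-arc is colored $1$ (since $2\cdot 1-0=2$ and $2\cdot 1-2=0$), and $B$ may pass under arcs of $A$ whenever the over-arc is colored $2$ (since $2\cdot 2-1=3$ and $2\cdot 2-3=1$). So inter-component crossings are locally consistent in both directions, neither sublink is forced to lie entirely over the other, and no splitness follows; your local/graph-theoretic argument produces no contradiction here. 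The obstruction that actually kills this configuration is global: every coloring condition $2y=x+z$ with $x,y,z\in\{0,1,2,3\}$ (more generally, with all colors in $\{0,1,\dots,k\}$ mod $2k+1$) already holds over the integers, hence modulo every prime, so such a link would have infinitely many prime divisors of its determinant, forcing $\det L=0$. This is precisely the paper's Theorem \ref{thm:lessk}, and without it (or an equivalent) your four-color case is incomplete for links. You would also need to check the analogous disconnected configurations for the other values of $\delta$ (e.g.\ $\delta=-1$, giving parts $\{0,2\}$ and $\{1,p-1\}$), some of which do yield splitness and some of which again require the integer-lift argument.
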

\begin{proof}
The minimum number of colors in the first four instances and the estimate in the fifth instance are now  standard facts (see \cite{kl, lm, satoh, Oshiro, Saito}).

Now for the choice of colors. In each of the first two instances, there is no other possibility since the minimum number of colors coincides with the maximum number of colors.

In \cite{satoh} it is shown how any minimal $5$-coloring can be realized with colors $1, 2, 3, 4$.

In \cite{Oshiro} it is shown how any minimal $7$-coloring can be realized with colors $0, 1, 2, 4$.

Lemma \ref{lem:012} and Corollary \ref{cor:lem012} below conclude the proof, and this is the original part of this result.
\end{proof}

\begin{lem}[\cite{Saito}, \cite{Ge5}]\label{lem:012}
Let $p$ be an odd prime. If a non-trivial $p$-coloring of a diagram does not use colors $0, 1, 2$ mod $p$, then  there is an equivalent $p$-coloring which uses colors $0, 1, 2$. Therefore, without loss of generality, colors $0, 1, 2$ can be assumed to be part of the set of colors used by a non-trivial $p$-coloring.
\end{lem}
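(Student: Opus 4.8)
The plan is to use the group structure of $p$-coloring automorphisms established in Proposition \ref{prop:colautoform} to transport an arbitrary non-trivial coloring into one that uses the three specific colors $0,1,2$. Recall that every automorphism has the form $f_{\lambda,\mu}(x)=\lambda x+\mu$ with $\lambda$ a unit mod $p$ and $\mu$ arbitrary, and that by Proposition \ref{prop:colautopreservecol} applying such an $f$ to every arc of a given $p$-coloring again yields a $p$-coloring, equivalent to the original in the sense of Definition \ref{def:equivclasscol}. So it suffices to exhibit a single automorphism that carries the given coloring onto a coloring containing $0$, $1$, and $2$ among its colors.

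The key observation is that a non-trivial coloring must use at least two distinct colors, and in fact, because the coloring condition $c=2b-a$ forces arithmetic relations among the colors, any non-trivial coloring of a knot uses at least three distinct colors. More useful for this lemma is that whenever three colors $a,b,c$ appear and satisfy $c=2b-a$, they form an arithmetic progression $a,b,c$ with common difference $b-a$. First I would locate a crossing whose three incident arcs carry three \emph{distinct} colors $a,b,c$ with $c=2b-a$; such a crossing exists in any non-trivial coloring, since otherwise every crossing would be monochromatic and the coloring would be forced to be trivial. At that crossing the colors are $a$, $a+d$, $a+2d$ where $d:=b-a\neq 0$ mod $p$. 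Since $p$ is prime, $d$ is a unit. I would then apply the automorphism $f_{\lambda,\mu}$ with $\lambda=d^{-1}$ and $\mu=-d^{-1}a$, i.e.\ $f(x)=d^{-1}(x-a)$. This sends $a\mapsto 0$, $a+d\mapsto 1$, and $a+2d\mapsto 2$, so the resulting equivalent coloring uses colors $0,1,2$.

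Finally I would record that, by Proposition \ref{prop:colautopreservecol}, the image under $f$ is genuinely a $p$-coloring of the same diagram and, being obtained by an automorphism, is equivalent to the original; hence it is non-trivial and uses the colors $0$, $1$, $2$. This proves the first sentence of the statement, and the ``therefore'' clause then follows immediately: replacing any non-trivial coloring by this equivalent representative, we may always assume without loss of generality that $0,1,2$ are among the colors used.

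The main obstacle I anticipate is the claim that a non-trivial coloring necessarily possesses a crossing with three distinct colors in arithmetic progression. One must argue carefully that a non-trivial coloring cannot have every crossing monochromatic (which would propagate a single color along the whole diagram, forcing triviality) nor can a non-trivial coloring use only two colors at a crossing in a way that never produces a genuine progression; tracking the coloring condition $c=2b-a$ around the diagram to guarantee the existence of a properly three-colored crossing is the delicate point, and it is exactly here that the primality of $p$ (ensuring $d$ is invertible) is used.
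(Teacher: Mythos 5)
Your proposal is correct and follows essentially the same route as the paper: locate a crossing whose three arcs bear distinct colors $a,b,c$ with $c=2b-a$, then apply the coloring automorphism $f(x)=(b-a)^{-1}(x-a)$ to send them to $0,1,2$. The ``delicate point'' you flag is handled in the paper by the one-line observation that, modulo an odd prime, a crossing satisfying $c=2b-a$ with exactly two distinct colors forces all three to coincide (since $2$ is invertible), so every non-monochromatic crossing is automatically trichromatic.
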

\begin{proof}
The proof of this result is found in \cite{Ge5}. We include it here for the convenience of the reader. Any diagram admitting a non-trivial coloring over an odd modulus has a crossing whose arcs bear distinct colors, say $a, b, c$ with $c=2b-a$ over the odd modulus (because if we try to use only two colors we will end up with either $a=b=c$ or an even modulus, \cite{kl}). Then $f(x)=(b-a)^{-1}(x-a)$ (mod $p$) is a $p$-coloring automorphism (see Proposition \ref{prop:colautopreservecol}) which transforms the set of colors used by the current non-trivial $p$-coloring into a new set of colors for a non-trivial $p$-coloring using colors $0, 1, 2$. This completes the proof.
\end{proof}

\begin{cor}\label{cor:lem012}
Any set of four colors mod $5$ can be used  on a minimal non-trivial $5$-coloring.

Any image of the set $\{ 0, 1, 2, 4 \}\,  ($mod $7 )$ via a $7$-coloring automorphism, can be used to color a minimal non-trivial $7$-coloring.
\end{cor}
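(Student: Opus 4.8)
The plan is to deduce both assertions from a single mechanism: a $p$-coloring automorphism carries a non-trivial $p$-coloring to another non-trivial $p$-coloring (Proposition \ref{prop:colautopreservecol}), and because every $f_{\lambda,\mu}$ is a bijection of $\mathbf{Z}/p\mathbf{Z}$, the transported coloring uses exactly as many distinct colors as the original one, its color set being the $f_{\lambda,\mu}$-image of the original color set. This last point is the content of Corollary \ref{cor:preliminaries}. Consequently, if some set $S$ of $k$ colors is realized by a minimal non-trivial $p$-coloring, then so is $f_{\lambda,\mu}(S)$ for every $p$-coloring automorphism $f_{\lambda,\mu}$. Thus it suffices to exhibit one realizable minimal color set in each modulus and then spread it over the desired sets by the action of the affine group.

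First I would handle $p=5$. By Satoh's result (\cite{satoh}), already invoked in the proof of Theorem \ref{thm:p<11}, the set $\{1,2,3,4\}$ is realized by a minimal non-trivial $5$-coloring. Now every four-element subset of $\mathbf{Z}/5\mathbf{Z}$ is the complement of exactly one residue $a$, and $\{1,2,3,4\}$ is the complement of $0$. The translation $f_{1,a}(x)=x+a$ is a $5$-coloring automorphism (Proposition \ref{prop:colautoform}), it permutes $\mathbf{Z}/5\mathbf{Z}$ setwise and sends $0$ to $a$; hence it carries $\{1,2,3,4\}=\mathbf{Z}/5\mathbf{Z}\setminus\{0\}$ onto $\mathbf{Z}/5\mathbf{Z}\setminus\{a\}$. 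Since $a$ is arbitrary, the affine group acts transitively on the five four-element subsets, and the mechanism above upgrades the single realizable set $\{1,2,3,4\}$ to all of them. This proves the first assertion.

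For $p=7$ the argument is even shorter. By Oshiro's result (\cite{Oshiro}), again quoted in the proof of Theorem \ref{thm:p<11}, the set $\{0,1,2,4\}$ is realized by a minimal non-trivial $7$-coloring. Applying the mechanism directly, $f_{\lambda,\mu}(\{0,1,2,4\})$ is realized by a minimal non-trivial $7$-coloring for every $7$-coloring automorphism $f_{\lambda,\mu}$, which is precisely the second assertion.

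I do not expect a genuine obstacle here, since the two base cases are imported and the remainder is group-action bookkeeping; the one point that deserves care is the claim that transport along $f_{\lambda,\mu}$ preserves the number of \emph{distinct} colors, which is exactly why one appeals to Corollary \ref{cor:preliminaries} rather than to Proposition \ref{prop:colautopreservecol} alone. I would also remark explicitly that each transported coloring lives on the same diagram that supported the base coloring, so no new existence or realizability question is introduced by the automorphism.
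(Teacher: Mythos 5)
Your proof is correct, but it takes a genuinely different route from the paper's. You seed the argument with the concrete realizable sets imported from Satoh ($\{1,2,3,4\}$ mod $5$) and Oshiro ($\{0,1,2,4\}$ mod $7$) and then spread them by the affine action, using the fact that a coloring automorphism transports a non-trivial coloring of a diagram to another non-trivial coloring of the \emph{same} diagram whose color set is the image of the original one; for $p=5$ you close the argument by noting that translations already act transitively on the four-element subsets, since these are exactly the complements of singletons. The paper argues in the opposite direction: it starts from an \emph{arbitrary} minimal coloring, normalizes it with Lemma \ref{lem:012} so that $0,1,2$ occur, and then pins down the fourth color --- mod $5$ it is $3$ or $4$ and $f(x)=2x$ identifies the two candidate sets; mod $7$ the candidates $3$ and $6$ are excluded by a forward reference to Theorem \ref{thm:lessk} and Corollary \ref{cor:s-consec}, leaving $4$ or $5$, which $f(x)=3x+2$ identifies. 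Your route is shorter, needs no case analysis on the fourth color, and avoids the forward dependence on the obstruction results of Section \ref{sect:obst}; the paper's route proves something stronger along the way, namely that \emph{every} minimal color set lies, up to automorphism, in the stated orbit, which is what supports the exclusion clause in item 4 of Theorem \ref{thm:p<11}. Since the corollary as literally stated only asserts the realizability direction, your argument fully suffices for it.
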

\begin{proof}
Lemma \ref{lem:012} allows us to use colors $0, 1, 2$ without loss of generality. On the other hand we already know that the minimum number of colors mod $5$ or mod $7$ is four. In each of these cases we just have to specify the fourth color.

Let us look at the mod $5$ case. The fourth color is either $3$ or $4$. But the $5$-coloring automorphism $f(x)=2x$ (mod $5$) maps $\{ 0, 1, 2, 3  \}$ into $\{  0, 1, 2, 4 \}$. Hence any set of four colors mod $5$ is the image of the set $\{ 0, 1, 2, 3 \}$ and can be used to color any diagram admitting minimal non-trivial $5$-colorings.

Now for mod $7$. The sets $\{ 0, 1, 2, 3 \}$ and $\{ 0, 1, 2, 6 \}$ ($f(x)=x+6$ mod $7$ transforms one into the other) are not admissible as we will see below in Theorem \ref{thm:lessk} and Corollary \ref{cor:s-consec}. The fourth color can then be either $4$, or $5$ and $f(x)=3x+2$ mod $7$ sends $\{  0, 1, 2, 4 \}$ into $\{ 0, 1, 2, 5\}$ mod $7$.

This concludes the proof.
\end{proof}

We extract the following Corollary from Theorem \ref{thm:p<11}.

\begin{cor}\label{cor:rigiditysmallerprimes}
Let $p$ be a prime from $\{ 2, 3, 5, 7 \}$. There exists a set, call it $S_p$, of colors mod $p$ whose cardinality is $2$ if $p=2$, $3$ if $p=3$, and $4$ if $p=5$ or $7$. Moreover, for any link $L$ admitting non-trivial $p$-colorings, a minimal $p$-coloring of $L$ can be set up using the colors from $S_p$.
\end{cor}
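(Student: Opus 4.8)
The plan is to read the sets $S_p$ directly off Theorem \ref{thm:p<11} and to observe that the genuine content of the corollary — a \emph{single} set of colors working simultaneously for \emph{every} link admitting non-trivial $p$-colorings — is already encoded in the fact that the statements of Theorem \ref{thm:p<11} are quantified over all links $L$ with $p \mid \det L$. Concretely, I would propose the explicit choices
\[
S_2 = \{ 0, 1 \}, \qquad S_3 = \{ 0, 1, 2 \}, \qquad S_5 = \{ 0, 1, 2, 3 \}, \qquad S_7 = \{ 0, 1, 2, 4 \},
\]
whose cardinalities are $2, 3, 4, 4$ respectively, as required, and then verify that each is universally sufficient.

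The verification splits into the two degenerate moduli and the two substantive ones. For $p = 2$ and $p = 3$ there is nothing to choose: the minimum number of colors coincides with the modulus, so $S_2$ and $S_3$ already exhaust $\mathbf{Z}/p\mathbf{Z}$, and statements (1) and (2) of Theorem \ref{thm:p<11} assert precisely that these two (resp. three) colors assemble a minimal coloring of any link with $2 \mid \det L$ (resp. $3 \mid \det L$). For $p = 5$, statement (3) of Theorem \ref{thm:p<11} says that \emph{any} four colors mod $5$ realize a minimal $5$-coloring; in particular the \emph{fixed} set $S_5 = \{0,1,2,3\}$ does so for every link $L$ with $5 \mid \det L$, which is exactly the universality we want.

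The only point demanding care is $p = 7$, and I expect it to be the crux, mild as it is. Here not every $4$-subset is admissible: by statement (4) of Theorem \ref{thm:p<11} the images of $\{0,1,2,3\}$ under a $7$-coloring automorphism are excluded, so $S_7$ must be chosen among the admissible sets rather than arbitrarily. I would take $S_7 = \{0,1,2,4\}$, whose admissibility for \emph{every} link with $7 \mid \det L$ is exactly the content of \cite{Oshiro} quoted in the proof of Theorem \ref{thm:p<11}; since $\{0,1,2,4\}$ is not an affine image of $\{0,1,2,3\}$, it avoids the forbidden orbit. Having fixed this admissible representative, the same argument as in the $p=5$ case yields that $S_7$ colors a minimal $7$-coloring of every such link, completing the list. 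The conceptual payoff worth stressing is that the rigidity repackaged here — one universal $S_p$ per prime — is precisely what will fail at $p = 11$, and it is that contrast which motivates the main theorem.
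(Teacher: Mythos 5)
Your proposal is correct and follows essentially the same route as the paper: the paper likewise dispatches $p=2,3$ by noting that the minimum number of colors equals the maximum, and handles $p=5,7$ by invoking Corollary \ref{cor:lem012} (equivalently, the Satoh and Oshiro results already packaged into Theorem \ref{thm:p<11}), which is exactly the content you unpack when you exhibit $S_5=\{0,1,2,3\}$ and $S_7=\{0,1,2,4\}$. The only difference is cosmetic — you name the sets explicitly where the paper leaves them implicit.
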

\begin{proof}
For $p=2$ or $p=3$ the statement is obvious since the minimum number of colors coincides with the maximum number of colors for these primes. For primes $p=5$ or $p=7$, Corollary \ref{cor:lem012} shows the statement is also true.
\end{proof}

\section{A New Fact For Prime $p=11$}\label{sect:remarkable}

\noindent

\begin{thm}\label{thm:6-2-7-2}
Knots $6_2$ and $7_2$ both have determinant $11$ and $mincol_{11}\, 6_2 = 5 = mincol_{11}\, 7_2$. Furthermore, there is a diagram for $6_2$, call it $D_{6_2}$, which admits a non-trivial $11$-coloring using $5$ colors and there is a diagram for $7_2$, call it $D_{7_2}$, which admits a non-trivial $11$-coloring using $5$ colors. But any set of $5$ colors used on $D_{6_2}$ to assemble a non-trivial $11$-coloring will not give rise to a non-trivial $11$-coloring on $D_{7_2}$, and reciprocally.
\end{thm}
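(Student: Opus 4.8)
The plan is to establish the three assertions of the statement in sequence: the determinant, the value of $mincol_{11}$, and the incompatibility of the color sets. First I would compute the determinants of $6_2$ and $7_2$ directly from their coloring matrices (equivalently, from their Alexander polynomials evaluated at $-1$), obtaining $\det = 11$ in each case. Since $11 > 7$ and $11 \mid \det$, part $5$ of Theorem \ref{thm:p<11} yields the lower bound $mincol_{11} \geq 5$ for both knots. For the matching upper bound I would exhibit the explicit diagrams $D_{6_2}$ and $D_{7_2}$ together with non-trivial $11$-colorings using the five colors $\{0,2,3,4,8\}$ and $\{0,3,4,5,6\}$ respectively, verifying the coloring condition $c=2b-a$ at every crossing. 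Combining the two bounds gives $mincol_{11}\, 6_2 = 5 = mincol_{11}\, 7_2$.

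The structural heart of the argument is the incompatibility claim, and I would reduce it to a purely arithmetic statement. Both knots have prime determinant $11$, so by Definition \ref{def:pnullitylink} we have $n_{11}=2$ and each admits exactly one equivalence class of non-trivial $11$-colorings. By Corollary \ref{cor:preliminaries} the group of coloring automorphisms $f_{\lambda,\mu}(x)=\lambda x + \mu$ acts transitively on this class, so any two non-trivial colorings of the fixed diagram $D_{6_2}$ differ by some $f_{\lambda,\mu}$, and similarly for $D_{7_2}$. Because affine maps are bijections of $\mathbf{Z}/11\mathbf{Z}$, they preserve the number of distinct colors; hence every non-trivial coloring of $D_{6_2}$ uses exactly five colors, and the collection of five-color palettes realizing a non-trivial coloring of $D_{6_2}$ is precisely the affine orbit $\mathcal{O}_{6_2}=\{\, f_{\lambda,\mu}(\{0,2,3,4,8\}) \,\}$, with an analogous orbit $\mathcal{O}_{7_2}=\{\, f_{\lambda,\mu}(\{0,3,4,5,6\}) \,\}$ for $D_{7_2}$. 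Since both base sets have five elements, the theorem reduces to showing $\mathcal{O}_{6_2}\cap\mathcal{O}_{7_2}=\varnothing$, i.e.\ that $\{0,2,3,4,8\}$ and $\{0,3,4,5,6\}$ lie in different orbits of the affine group acting on five-element subsets of $\mathbf{Z}/11\mathbf{Z}$.

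To separate the orbits I would introduce an affine invariant built from pairwise differences. Partition the nonzero residues into the five classes $\{\pm1\},\{\pm2\},\{\pm3\},\{\pm4\},\{\pm5\}$, and for a subset $S$ record the multiplicity with which each class occurs among the $\binom{5}{2}=10$ unordered differences of $S$. A map $x\mapsto \lambda x+\mu$ sends a difference $d$ to $\lambda d$, which is unaffected by $\mu$ and merely permutes the five classes via multiplication by $\lambda$; therefore the sorted multiset of the five multiplicities is an affine invariant. Computing it gives $(2,2,2,2,2)$ for $\{0,2,3,4,8\}$ and $(1,2,2,2,3)$ for $\{0,3,4,5,6\}$, and since these multisets differ, the two sets are not affinely equivalent, the orbits are disjoint, and the incompatibility follows. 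The step I expect to be the main obstacle is precisely this orbit-disjointness: the transitivity machinery cleanly reduces the geometry to an orbit comparison, but one still needs an invariant that is both computable and genuinely discriminating, and the difference-class multiplicity multiset is what does the job. A secondary, more hands-on difficulty is producing the minimal diagrams $D_{6_2}$ and $D_{7_2}$ in the first place, since one must reduce standard diagrams while preserving a five-color coloring; verifying them, however, is routine.
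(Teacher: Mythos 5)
Your proposal is correct, and its overall skeleton coincides with the paper's: determinant $11$ for both knots, the lower bound $mincol_{11}\geq 5$ from statement $5$ of Theorem \ref{thm:p<11}, explicit $5$-colored diagrams for the upper bound, and then the reduction --- via the fact that a prime-determinant knot has a single equivalence class of non-trivial colorings on which the affine group acts transitively --- to showing that $\{0,2,3,4,8\}$ and $\{0,3,4,5,6\}$ are not related by any $f_{\lambda,\mu}$. Where you genuinely diverge is in how that last arithmetic fact is proved. The paper does it by brute force: Table \ref{Ta:lambdax+mu} enumerates all $20$ admissible choices of $(f(0),f(2))$, solves for $(\lambda,\mu)$ in each case, and checks that some image of $3$, $4$, or $8$ falls outside $\{0,3,4,5,6\}$. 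You instead construct an affine invariant, the sorted multiset of multiplicities with which the difference classes $\{\pm 1\},\dots,\{\pm 5\}$ occur among the ten pairwise differences; since $f_{\lambda,\mu}$ multiplies differences by the unit $\lambda$ and hence only permutes the classes, this multiset is invariant, and your computed values $(2,2,2,2,2)$ versus $(1,2,2,2,3)$ (which I have checked) separate the two orbits at once. Your route is shorter, less error-prone, and immediately reusable for other primes and other candidate palettes --- it would, for instance, mechanize the analogous comparisons implicit in Table \ref{Ta:mincolsub11} --- while the paper's enumeration has the virtue of requiring no auxiliary concept and of exhibiting concretely, for each candidate automorphism, which color fails to land in the target set. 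Both are complete proofs of the key step.
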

\begin{proof}
Both knots $6_2$ and $7_2$ have determinant $11$, see \cite{Rolfsen}. In Figures \ref{fig:6-2} and \ref{fig:7-2} below we see that each of them can be non-trivially colored with as few as $5$ colors, mod $11$. Then $mincol_{11}\,  6_2 = 5 = mincol_{11}\,  7_2$, by Theorem \ref{thm:p<11}, statement $5$. The diagram on the right of Figure \ref{fig:6-2} is the $D_{6_2}$ referred to in the statement. Analogously, the diagram on the right of Figure \ref{fig:7-2} is the $D_{7_2}$. Furthermore,  we see that $D_{6_2}$ uses colors $\{ 0, 2, 3, 4, 8 \}$, mod $11$, whereas  $D_{7_2}$ uses colors $\{ 0, 3, 4, 5, 6 \}$, mod $11$.
\begin{figure}[!ht]
	\psfrag{0}{\huge$0$}
	\psfrag{1}{\huge$1$}
	\psfrag{2}{\huge$2$}
	\psfrag{3}{\huge$3$}
	\psfrag{4}{\huge$4$}
	\psfrag{8}{\huge$8$}
	\psfrag{11}{\huge$\mathbf{11}$}
	\centerline{\scalebox{.26}{\includegraphics{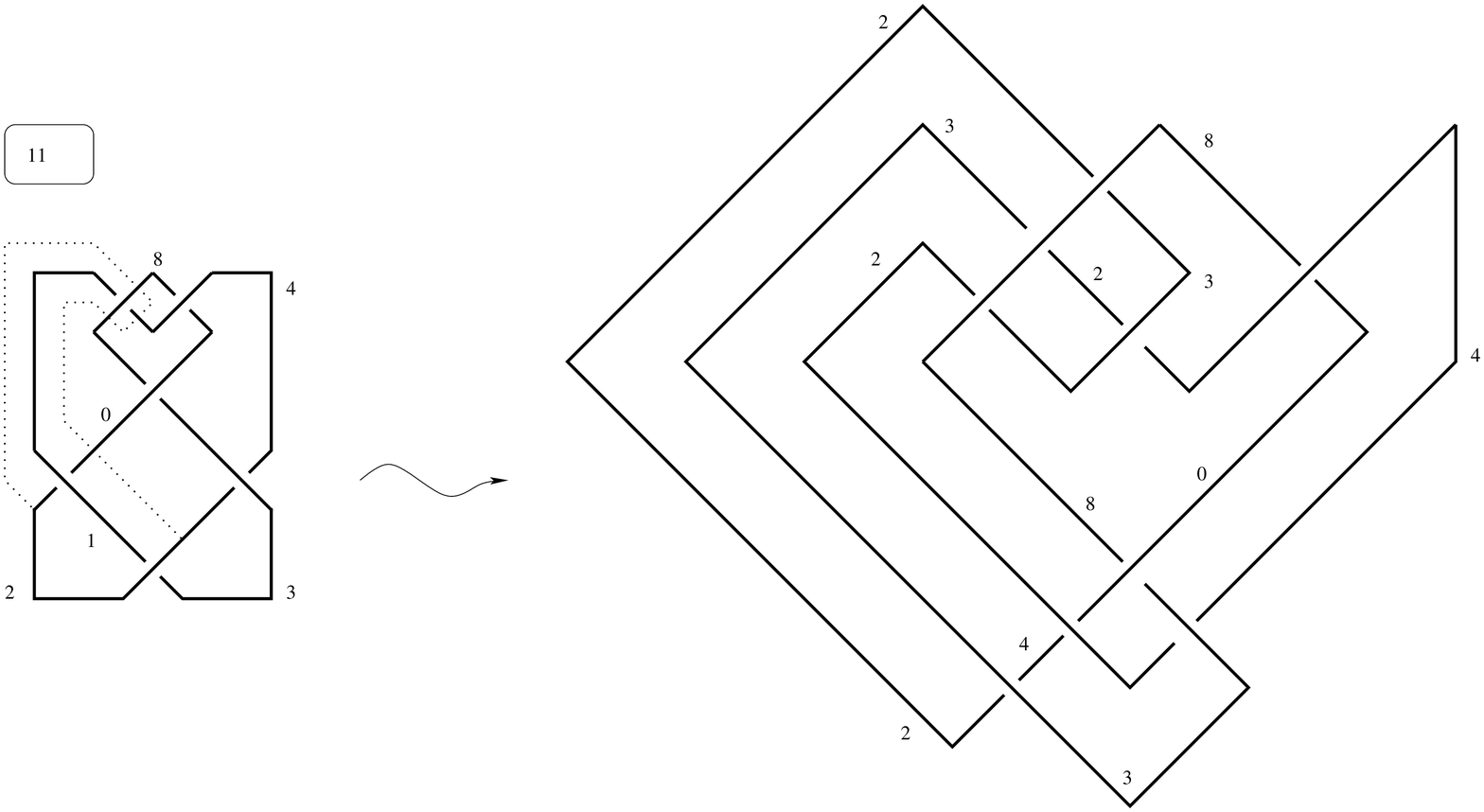}}}
	\caption{The knot $6_2$ whose determinant is $11$.   On the left-hand side, a diagram with minimum number of crossings equipped with a non-trivial $11$-coloring; the dotted lines indicate the move that will take it to the diagram on the right-hand side. On the right-hand side the diagram $D_{6_2}$ equipped with a non-trivial $11$-coloring using a minimum number of colors modulo $11$: $\{ 0, 2, 3, 4, 8 \}$.}\label{fig:6-2}
\end{figure}
\begin{figure}[!ht]
	\psfrag{0}{\huge$0$}
	\psfrag{1}{\huge$1$}
	\psfrag{2}{\huge$2$}
	\psfrag{3}{\huge$3$}
	\psfrag{4}{\huge$4$}
	\psfrag{5}{\huge$5$}
	\psfrag{6}{\huge$6$}
	\psfrag{11}{\huge$\mathbf{11}$}
	\centerline{\scalebox{.26}{\includegraphics{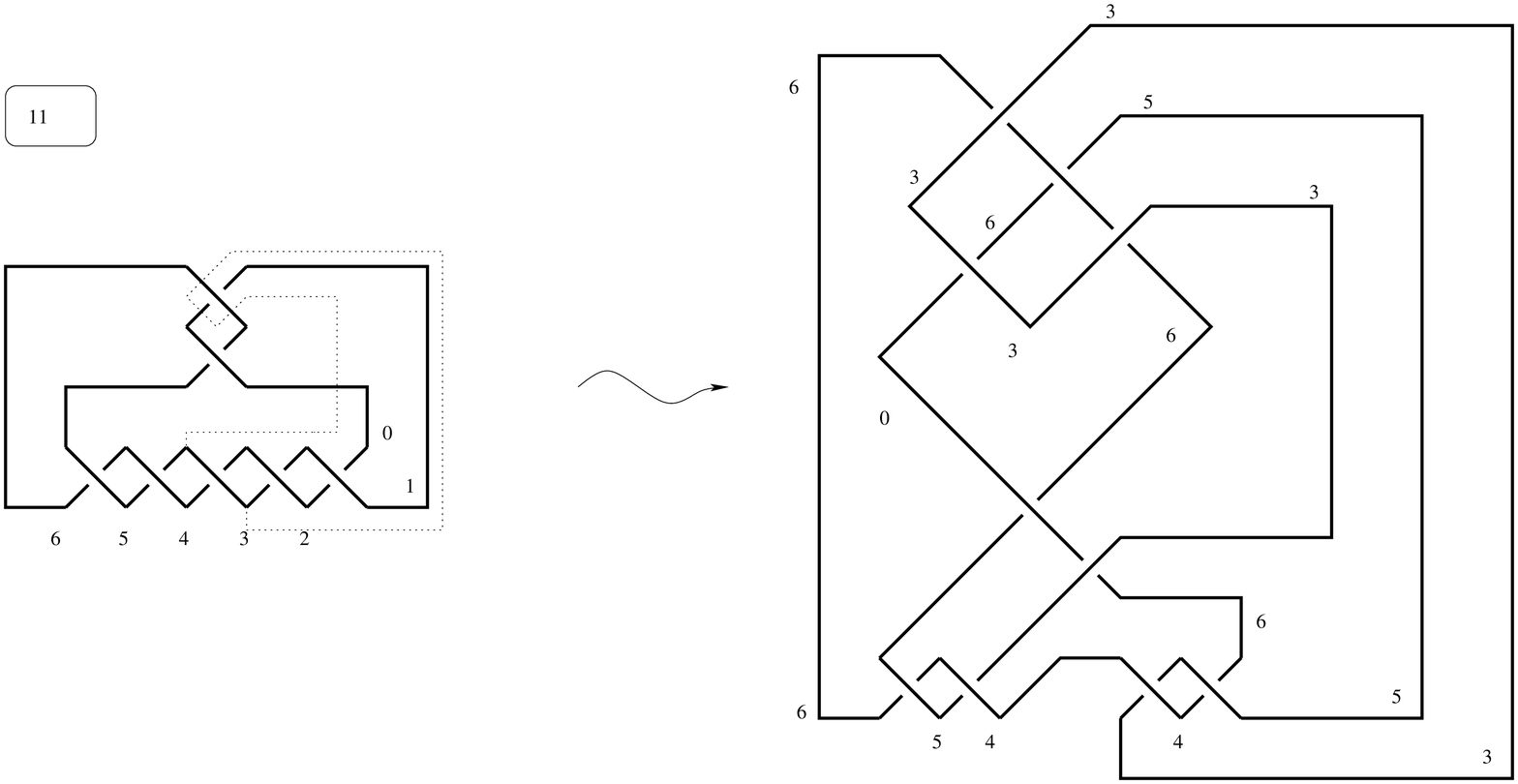}}}
	\caption{The knot $7_2$ whose determinant is $11$.   On the left-hand side, a diagram with minimum number of crossings equipped with a non-trivial $11$-coloring; the dotted lines indicate the move that will take it to the diagram on the right-hand side. On the right-hand side the diagram $D_{7_2}$ equipped with a non-trivial $11$-coloring using a minimum number of colors modulo $11$: $\{ 0, 3, 4, 5, 6 \}$.}\label{fig:7-2}
\end{figure}
We now prove that there is no $11$-coloring  automorphism that maps one of these sets into the other. We reason as follows. If there was such an automorphism, call it $f$, then $f(0)$ and $f(2)$ would be distinct elements of $\{  0, 3, 4, 5, 6 \}$. We then explore the different possibilities for the values $f(0)$ and $f(2)$ can take on and check that none of them gives rise to an automorphism that maps $\{ 0, 2, 3, 4, 8 \}$ onto $\{  0, 3, 4, 5, 6 \}$. For instance, in the first row of Table \ref{Ta:lambdax+mu} we assume $f(0)=0, f(2)=3$. Since $f(x)=\lambda x + \mu$ then $0=0\lambda  + \mu, 3=2\lambda +\mu $ so $\lambda = 7, \mu =0$. Then $f(3)=3\cdot 7 + 0=21=_{11}10$ (which does not belong to $\{  0, 3, 4, 5, 6 \}$ and this is indicated by the X), $f(4)=4\cdot 7 + 0=28=_{11}=6$, and $f(8)=8\cdot 7 +0=56=_{11}1$ (which does not belong to $\{  0, 3, 4, 5, 6 \}$ and this is indicated by the X). The calculations leading to the expressions in other rows of Table \ref{Ta:lambdax+mu} are performed analogously. In each row we see that there is always at least one element of $f(\{ 0, 2, 3, 4, 8 \})$ which does not belong to $\{  0, 3, 4, 5, 6 \}$. This completes the proof that there is no $11$-coloring automorphism which maps one of the indicated sets into the other.

Moreover, since both $6_2$ and $7_2$ have prime determinant $11$, then each of them admits only one equivalence class of $11$-colorings (see Theorem \ref{thm:preliminaries}, Statement $4$., and Definition \ref{def:pnullitylink}). This means that if a set of $5$ colors mod $11$, say $\{  a_1, \dots , a_5  \}$ could non-trivially color $D_{6_2}$ (respect., if a set of $5$ colors mod $11$, say $\{  b_1, \dots , b_5  \}$ could non-trivially color $D_{7_2}$), then there would be an $11$-coloring automorphism, say $f$, such that $f(\{  a_1, \dots , a_5  \}) = \{ 0, 2, 3, 4, 8 \}$ (respect., there would be an $11$-coloring automorphism, say $g$, such that $g(\{ 0, 3, 4, 5, 6 \}) = \{ b_1, \dots , b_5 \}$). If there was an $11$-coloring automorphism, call it $h$, such that $\{  a_1, \dots , a_5  \} = h(\{ b_1, \dots , b_5 \})$ then $f\circ h\circ g$ would be an $11$-coloring automorphism such that $\{ 0, 2, 3, 4, 8 \} = f(\{  a_1, \dots , a_5  \}) = f( h(\{ b_1, \dots , b_5 \})) = f(h(g(\{ 0, 3, 4, 5, 6 \})))$. But in the preceding paragraph  we proved that such an $11$-coloring automorphism does not exist. This concludes the proof.
\begin{table}[h!]
\begin{center}
\scalebox{.9}{\begin{tabular}{| c | c ||  c | c || c | c |  c | }\hline
$f(0)=0$ & $f(2)=3$  & $\lambda=7$ & $\mu=0$ & $f(3)=10$\, X & $f(4)=6$  & $f(8)=1$\, X  \\ \hline
$f(0)=0$ & $f(2)=4$  & $\lambda=2$ & $\mu=0$ & $f(3)=6$ & $f(4)=8$ \, X & $f(8)=5$  \\ \hline
$f(0)=0$ & $f(2)=5$  & $\lambda=8$ & $\mu=0$ & $f(3)=2$\, X & $f(4)=10$ \, X  & $f(8)=9$ \, X \\ \hline
$f(0)=0$ & $f(2)=6$  & $\lambda=3$ & $\mu=0$ & $f(3)=9$\, X & $f(4)=1$\, X  & $f(8)=2$\, X  \\ \hline \hline
$f(0)=3$ & $f(2)=0$  & $\lambda=4$ & $\mu=3$ & $f(3)=4$ & $f(4)=8$\, X  & $f(8)=2$\, X  \\ \hline
$f(0)=3$ & $f(2)=4$  & $\lambda=6$ & $\mu=3$ & $f(3)=10$ \, X & $f(4)=5$  & $f(8)=7$ \, X \\ \hline
$f(0)=3$ & $f(2)=5$  & $\lambda=1$ & $\mu=3$ & $f(3)=6$ & $f(4)=7$\, X   & $f(8)=0$  \\ \hline
$f(0)=3$ & $f(2)=6$  & $\lambda=7$ & $\mu=3$ & $f(3)=2$\, X & $f(4)=9$\, X  & $f(8)=4$  \\ \hline \hline
$f(0)=4$ & $f(2)=0$  & $\lambda=9$ & $\mu=4$ & $f(3)=9$\, X & $f(4)=7$\, X  & $f(8)=10$\, X  \\ \hline
$f(0)=4$ & $f(2)=3$  & $\lambda=5$ & $\mu=4$ & $f(3)=8$ \, X & $f(4)=2$\, X  & $f(8)=0$  \\ \hline
$f(0)=4$ & $f(2)=5$  & $\lambda=6$ & $\mu=4$ & $f(3)=0$ & $f(4)=6$   & $f(8)=8$\, X  \\ \hline
$f(0)=4$ & $f(2)=6$  & $\lambda=1$ & $\mu=4$ & $f(3)=7$\, X & $f(4)=8$\, X  & $f(8)=1$\, X  \\ \hline \hline
$f(0)=5$ & $f(2)=0$  & $\lambda=3$ & $\mu=5$ & $f(3)=3$ & $f(4)=6$  & $f(8)=7$\, X  \\ \hline
$f(0)=5$ & $f(2)=3$  & $\lambda=10$ & $\mu=5$ & $f(3)=2$ \, X & $f(4)=1$\, X  & $f(8)=8$\, X  \\ \hline
$f(0)=5$ & $f(2)=4$  & $\lambda=5$ & $\mu=5$ & $f(3)=9$\, X & $f(4)=3$   & $f(8)=1$\, X  \\ \hline
$f(0)=5$ & $f(2)=6$  & $\lambda=6$ & $\mu=5$ & $f(3)=1$\, X & $f(4)=7$\, X  & $f(8)=9$\, X  \\ \hline \hline
$f(0)=6$ & $f(2)=0$  & $\lambda=8$ & $\mu=6$ & $f(3)=8$\, X & $f(4)=5$  & $f(8)=4$  \\ \hline
$f(0)=6$ & $f(2)=3$  & $\lambda=4$ & $\mu=6$ & $f(3)=7$ \, X & $f(4)=0$  & $f(8)=5$  \\ \hline
$f(0)=6$ & $f(2)=4$  & $\lambda=10$ & $\mu=6$ & $f(3)=3$ & $f(4)=2$\, X   & $f(8)=9$\, X  \\ \hline
$f(0)=6$ & $f(2)=5$  & $\lambda=5$ & $\mu=6$ & $f(3)=10$\, X & $f(4)=4$  & $f(8)=2$\, X  \\ \hline
\end{tabular}}
\caption{There is no $f(x)=_{11}\lambda x + \mu$ such that  $f(\{ 0, 2, 3, 4, 8 \})=\{ 0, 3, 4, 5, 6 \}$. An $X$ indicates that the number to its left does not belong to $\{ 0, 3, 4, 5, 6 \}$.}
\label{Ta:lambdax+mu}
\end{center}
\end{table}
\end{proof}

\subsection{New Terminology}\label{subsect:new}
\noindent

Theorem \ref{thm:6-2-7-2} proves that the pattern described by Corollary \ref{cor:rigiditysmallerprimes} breaks down at $p=11$. We now introduce some new terminology. Among other things this new terminology allows us to state Theorem \ref{thm:nounivered} in a more concise form.

\begin{def.}$($sufficient set of colors; minimal sufficient set of colors$)$\label{def:minsuf}
Let $p$ be a prime and let $L$ be a link admitting non-trivial $p$-colorings.
\begin{itemize}
\item A {\bf $p$-sufficient set of colors} $($for $L$$)$ is a set of  integers mod $p$ such that a non-trivial $p$-coloring can be realized on some diagram of $L$ with colors from this set.
\item A {\bf $p$-minimal sufficient set of colors} $($for $L$$)$  is a $p$-sufficient set of colors $($for $L$$)$ whose cardinality is $mincol_p\, L$.
\end{itemize}
\end{def.}

This terminology refers to sets of colors capable of coloring particular diagrams of the link at stake. This terminology will be useful below when we try to minimize the colors of a coloring, see Section \ref{sect:11-13}. We now introduce the universal sort of object.

\begin{def.}$($universal minimal sufficient set of colors$)$\label{def:universalminsuf}
Let $p$ be a prime such that for any two links, $L$ and $L'$, admitting non-trivial  $p$-colorings, $mincol_p\, L = mincol_p\, L'$; in this case, let $mincol (p)$ denote this common minimum number of colors, $mincol (p) = mincol_p\, L = mincol_p\, L'$.

If there is a set, call it $S$, of $mincol(p)$ integers mod $p$ such that any diagram supporting minimal $p$-colorings can be colored with colors from this set, then we call $S$ a {\bf universal $p$-minimal sufficient set of colors}.

We say there is no universal $p$-minimal set of colors when either
\begin{itemize}
\item there are distinct links admitting non-trivial $p$-colorings  with different minimum numbers of colors; or
\item there is a unique minimum number of colors, $mincol(p)$, but there is no set of $mincol(p)$ distinct integers mod $p$ such that any diagram supporting a minimal $p$-coloring can be colored with the colors from this set.
\end{itemize}

\bigbreak

\end{def.}

We remark that Corollary \ref{cor:rigiditysmallerprimes} above proves that there is a universal $p$-minimal sufficient set of colors for each of the primes $p=2, 3, 5,$ and $7$. Furthermore, it proves that for any of the primes $2, 3, 5$, any set with $mincol (p)$ colors is a universal $p$-minimal sufficient set of colors, whereas for prime $7$, any set with $mincol (7)$ colors, but those obtained from $\{ 0, 1, 2, 3\}$ via a $7$-coloring automorphism,  is a universal $7$-minimal sufficient set of colors.
\bigbreak
We will drop the $p$- from these designations when it is clear from context which modulus we are working on.  With this terminology we can now state the following.

\begin{thm}\label{thm:nounivered}
There is no universal $11$-minimal sufficient set of colors.
\end{thm}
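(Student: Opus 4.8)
The plan is to argue by contradiction, packaging the computational content already established in Theorem \ref{thm:6-2-7-2} together with the prime-determinant hypothesis on $6_2$ and $7_2$. Suppose a universal $11$-minimal sufficient set of colors $S$ exists. By Definition \ref{def:universalminsuf} its very existence presupposes that $mincol_{11}$ takes a common value $mincol(11)$ on every link admitting non-trivial $11$-colorings; applying this to $6_2$ and invoking Theorem \ref{thm:6-2-7-2} forces $mincol(11) = mincol_{11}\, 6_2 = 5$, so $|S| = 5$. Since $D_{6_2}$ and $D_{7_2}$ each carry a non-trivial $11$-coloring with $5 = mincol_{11}\, 6_2 = mincol_{11}\, 7_2$ colors, both are diagrams supporting minimal $11$-colorings, so by universality each admits a non-trivial $11$-coloring whose palette is contained in $S$.

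Next I would determine exactly which palettes are available on these two fixed diagrams. Because $6_2$ has prime determinant $11$, Definition \ref{def:pnullitylink} gives $n_{11}(6_2)=2$ and hence exactly one equivalence class of non-trivial $11$-colorings on $D_{6_2}$; by Corollary \ref{cor:preliminaries} the group of $11$-coloring automorphisms acts transitively on this class. Thus every non-trivial $11$-coloring of $D_{6_2}$ is obtained from the displayed one by post-composing with some automorphism $f$, and its set of colors is precisely $f(\{0,2,3,4,8\})$, a set of cardinality $5$. The identical argument on $D_{7_2}$ shows every non-trivial $11$-coloring of it uses exactly the colors $g(\{0,3,4,5,6\})$ for some automorphism $g$. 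Combining these with the containment from the previous paragraph, the cardinality count $|f(\{0,2,3,4,8\})| = 5 = |S|$ collapses containment to equality $S = f(\{0,2,3,4,8\})$, and likewise $S = g(\{0,3,4,5,6\})$. Hence $g^{-1}\circ f$, which is again an $11$-coloring automorphism by Proposition \ref{prop:colautoform}, maps $\{0,2,3,4,8\}$ onto $\{0,3,4,5,6\}$, contradicting Theorem \ref{thm:6-2-7-2} and its exhaustive Table \ref{Ta:lambdax+mu}. This contradiction completes the proof.

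The genuinely hard work has already been discharged in Theorem \ref{thm:6-2-7-2}, so the present statement is essentially its reformulation in the language of Definition \ref{def:universalminsuf}. The one point deserving care — and the step I expect to be the crux — is the passage from ``$D_{6_2}$ is colorable with colors from $S$'' to the \emph{equality} $S = f(\{0,2,3,4,8\})$ rather than mere containment. This rests on two facts that must be cited precisely: prime determinant yields a single equivalence class (so every non-trivial coloring of the fixed diagram has its palette in one automorphism orbit), and a coloring automorphism is a bijection (so that palette has full cardinality $5 = |S|$). Once both are in place the containment is forced to be an equality and the concluding contradiction is immediate.
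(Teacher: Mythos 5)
Your proposal is correct and follows essentially the same route as the paper: the paper disposes of Theorem \ref{thm:nounivered} as a restatement of Theorem \ref{thm:6-2-7-2}, whose proof already contains exactly the deduction you spell out (single equivalence class from prime determinant, transitivity of the automorphism group, and the nonexistence of an automorphism carrying $\{0,2,3,4,8\}$ onto $\{0,3,4,5,6\}$ from Table \ref{Ta:lambdax+mu}). Your only addition is to make explicit the cardinality argument upgrading ``colorable with colors from $S$'' to equality of palettes, which is a sound and worthwhile clarification but not a different method.
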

\begin{proof}
This is a restatement of Theorem \ref{thm:6-2-7-2} using the new terminology. Theorem \ref{thm:6-2-7-2} was already proved.
\end{proof}

\section{Obstructions to Minimizing the Set of Colors of a Coloring}\label{sect:obst}

\noindent

We prove that certain sets of colors cannot be sufficient sets of colors. This is helpful when choosing the next color to eliminate in the process of reducing the number of colors of a non-trivial coloring.

\begin{thm}\label{thm:lessk}
Let $k$ be a positive integer and $L$ a link with non-null determinant, admitting non-trivial $(2k+1)$-colorings.

If $S \subseteq \{ 0, 1, 2, \cdots , k  \}$ $($mod $2k+1$$)$, then $S$ is not a $(2k+1)$-Sufficient Set of Colors for $L$.
\end{thm}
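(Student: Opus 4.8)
The plan is to argue by contradiction, exploiting the fact that colors drawn from the ``lower half'' $\{0,1,\dots,k\}$ of the residues modulo $2k+1$ cannot wrap around under the coloring condition, so that a coloring using only such colors is forced to be an \emph{integral} coloring, which in turn must be trivial whenever $\det L \neq 0$.

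First I would suppose, for contradiction, that some diagram $D$ of $L$ carries a non-trivial $(2k+1)$-coloring all of whose colors lie in $S \subseteq \{0,1,\dots,k\}$, and represent each arc's color by its integer representative in $\{0,1,\dots,k\}$. Consider an arbitrary crossing, with under-arcs colored $a,c$ and over-arc colored $b$, so that the coloring condition reads $2b - a - c \equiv 0 \pmod{2k+1}$. Because $a,b,c \in \{0,1,\dots,k\}$, the integer $2b - a - c$ satisfies $-2k \le 2b - a - c \le 2k$, and the only multiple of $2k+1$ lying in the interval $[-2k,2k]$ is $0$ (since $2k+1 > 2k$). Hence $2b - a - c = 0$ as an integer, i.e., the relation $c = 2b - a$ holds over $\mathbf{Z}$, not merely modulo $2k+1$. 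This no-wraparound observation, driven by the elementary range estimate, is the heart of the argument.

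The remaining step is to show that an integral coloring of $L$ must be trivial. I regard the color vector as a rational solution of the homogeneous system whose matrix is the coloring matrix $M$ of $D$. By Theorem \ref{thm:preliminaries}, $\det M = 0$ and the Smith Normal Form of $M$ has a single $0$ on its diagonal precisely because $\det L \neq 0$ (all other diagonal entries are then nonzero); thus the rank of $M$ over $\mathbf{Q}$ is $n-1$, where $n$ is the number of arcs, and the kernel of $M$ over $\mathbf{Q}$ is one-dimensional. Since the constant vectors already span a one-dimensional space of solutions, every rational --- hence every integral --- coloring is constant, that is, trivial. This contradicts the assumption that the coloring was non-trivial, and therefore no such $S$ can be a $(2k+1)$-sufficient set of colors.

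I expect the only delicate point to be the justification that $\det L \neq 0$ forces the rational nullity of the coloring matrix to be exactly $1$; this is where the hypothesis on the determinant is genuinely used, and it is what upgrades ``integral coloring'' to ``trivial coloring.'' Everything else reduces to the range estimate above. I would also note that primality of $2k+1$ is never invoked, so the statement holds for all odd moduli, consistent with its application to $p=7$ in Corollary \ref{cor:lem012}.
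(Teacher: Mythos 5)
Your proof is correct, and its first half is exactly the paper's argument: the range estimate $0\le 2b\le 2k$ and $0\le a+c\le 2k$ (equivalently, $2b-a-c\in[-2k,2k]$) forces each coloring condition to hold over $\mathbf{Z}$, not just modulo $2k+1$. Where you diverge is in how the integrality is converted into a contradiction. The paper reduces the integer relations modulo every prime, concludes that $L$ would admit non-trivial $p$-colorings for infinitely many primes $p$, and contradicts the finiteness of the set of prime divisors of $\det L\neq 0$. You instead observe that an integral coloring is a rational kernel vector of the coloring matrix $M$, and that $\det L\neq 0$ forces the Smith Normal Form of $M$ to have exactly one zero on its diagonal, so the rational kernel is one-dimensional and hence consists only of the constant (trivial) colorings. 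The two finishes use the same underlying fact (the SNF structure guaranteed by Theorem \ref{thm:preliminaries}), but yours is slightly more self-contained: it avoids the small unstated point in the paper's version that the reduced colorings remain non-trivial modulo all sufficiently large primes (true, since the finitely many distinct integer colors stay distinct mod $p$ for $p$ large), and it makes explicit exactly where $\det L\neq 0$ enters. Your closing remark that primality of $2k+1$ is never used matches the paper, which also states the theorem for a general odd modulus.
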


\begin{proof}
Assume to the contrary and suppose there is a diagram of $L$ endowed with a non-trivial $(2k+1)$-coloring whose sequence of representatives of the distinct colors is $(c_i)\subseteq \{ 0, 1, 2, \dots , k  \}$.  Then
\[
0\leq 2c_{i_1} \leq 2k \qquad \qquad \text{ and }  \qquad \qquad 0 \leq c_{i_2} + c_{i_3} \leq 2k \qquad \qquad \text{ for any }\,  c_{i_1}, c_{i_2}, c_{i_3} \in \{ 0, 1, 2, \dots , k  \}.
\]
Let us then consider the triples $(c_{i_1}, c_{i_2}, c_{i_3})$ for which the coloring condition holds at a crossing of the diagram i.e. for which
\[
2c_{i_1} =  c_{i_2} + c_{i_3} \qquad \qquad \text{ mod }  2k+1.
\]
Since each side of these equalities takes on a value between $0$ and $2k$ then each of these equalities also holds over the integers.

This further implies that these coloring conditions hold for any prime which means that the link $L$ has non-trivial colorings modulo infinitely many primes. But this conflicts with the standing hypothesis that $\det L \neq 0$, thus completing the proof.
\end{proof}

\begin{cor}\label{cor:s-consec}
Let $k$  be a positive integer. Let $f$ be a $(2k+1)$-coloring automorphism. Let $L$ be a link with non-null determinant, admitting non-trivial $(2k+1)$-colorings.

A set $S\subseteq \{ f(0), f(1), \dots \, , f(k) \}$ cannot be a $(2k+1)$-Sufficient Set of Colors for $L$.
\end{cor}
\begin{proof}
Assume to the contrary and suppose there is a non-trivial $(2k+1)$-coloring consisting of colors from the set:
\[
\{ f(0), f(1), f(2), \cdots , f(k)  \}.
\]
Let $g$ be the inverse permutation to $f$. Then colors from the set
\[
\{ g(f(0)), g(f(1)), g(f(2)), \cdots , g(f(k)) \} = \{ 0, 1, 2, \cdots , k  \}
\]
also give rise to a non-trivial $(2k+1)$-coloring which contradicts Theorem \ref{thm:lessk}. This completes the proof.
\end{proof}

\subsection{Further Obstructions.}

 The relevance of Theorem \ref{thm:lessk} and Corollary \ref{cor:s-consec} is the following.
 A possible way of reducing the number of colors is to start from a diagram of the link under study endowed with a non-trivial $p$-coloring and to use Reidemeister moves followed by local reassignment of colors, to obtain a new diagram of the same link endowed with a non-trivial $p$-coloring using a proper subset of the set of colors used by the former coloring. Theorem \ref{thm:lessk} and Corollary \ref{cor:s-consec} may help recognizing which colors of the former coloring may be eliminated. These results indicate which colors cannot be removed; the remaining ones are the candidates to colors to be eliminated.

Theorem \ref{thm:kauffmansaito} provides us with a subsequent test on these candidates.

\begin{thm}\label{thm:kauffmansaito}$\cite{KauffmanChicago2004, Saito}$
Let $k$  be a positive integer.  Let $L$ be a link of non-null determinant and admitting non-trivial $(2k+1)$-colorings.

Suppose $S= \{   c_1, \dots , c_n   \}$ is a $(2k+1)$-Sufficient Set of Colors for $L$. $($Without loss of generality we take the $c_i$'s from $\{ 0, 1, 2, \dots \, , 2k \}$.$)$ For each $c_i$ $(i=1, 2, \dots , n)$, let $S_i$ be the set of unordered pairs $\{ c_i^1, c_i^2 \}$ $($with $c_i^1\neq c_i^2 \text{ from } S$$)$ such that $2c_i=c_i^1+c_i^2$ mod $2k+1$.

\begin{enumerate}
\item There is at least one $i$ such that the expression $2c_i=c_i^1+c_i^2$ does not make sense over the integers, it only makes sense modulo $2k+1$.

\item If there is $i\in \{ 1, 2, \dots , n \}$ such that, for all $j\in \{ 1, 2, \dots , n  \}$, $c_i$ does not belong to any of the unordered pairs in $S_j$, then $n> mincol_p\, L$ and  $S\setminus \{ c_i \}$ is also a $(2k+1)$-Sufficient Set of Colors.
\end{enumerate}
\end{thm}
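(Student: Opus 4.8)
The plan is to fix, once and for all, a diagram $D$ of $L$ together with a non-trivial $(2k+1)$-coloring $\chi$ whose colors all lie in $S$; this exists precisely because $S$ is a $(2k+1)$-sufficient set of colors. Throughout I would write the coloring condition at a crossing as $2b = a+c \pmod{2k+1}$, where $b$ is the color of the over-arc and $a,c$ are the colors of the two under-arcs. First I would record the basic dichotomy at each crossing of $D$: either all three incident arcs share a color (a \emph{trivial crossing}), or, since $2k+1$ is odd and $2$ is invertible, the three colors are pairwise distinct (a \emph{bichromatic crossing}) — this is exactly the situation already exploited in the proof of Lemma \ref{lem:012}. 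At a bichromatic crossing the two under-arc colors form an unordered pair $\{a,c\}$ belonging to $S_b$, so the sets $S_j$ are precisely the bookkeeping of how each color of $S$ can occur as an over-arc over two distinct under-arcs.

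For statement $1$ I would reuse the integer-lifting argument of Theorem \ref{thm:lessk}. Choose the representatives $c_i\in\{0,1,\dots,2k\}$ and suppose, for contradiction, that every crossing relation $2c_{i_1}=c_{i_2}+c_{i_3}$ arising from $\chi$ held over $\mathbf{Z}$, not merely modulo $2k+1$. Then the integer-valued assignment obtained from $\chi$ would satisfy all coloring conditions over $\mathbf{Z}$, hence modulo every prime $q$; for $q>2k$ the representatives stay pairwise distinct, so $L$ would admit non-trivial $q$-colorings for infinitely many primes $q$, forcing $q\mid\det L$ for infinitely many $q$ by Theorem \ref{thm:preliminaries}, against $\det L\neq 0$. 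Hence at least one crossing relation fails over $\mathbf{Z}$, and the corresponding under-arc pair sits in some $S_i$ and witnesses statement $1$.

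For statement $2$ the engine is the following structural observation, which I would extract directly from the hypothesis. If an arc $\alpha$ carries the color $c_i$, then at each of its two endpoints $\alpha$ is an under-arc; were that endpoint crossing bichromatic, the pair consisting of $c_i$ and the other under-arc color would lie in the $S_j$ indexed by the over-arc color there, contradicting the assumption that $c_i$ lies in no pair of any $S_j$. So both endpoint crossings are trivial, whence the two arcs adjacent to $\alpha$ along the underlying curve of $L$ are again colored $c_i$. Propagating this around each component shows that the set of $c_i$-colored arcs is a union of entire components of $L$. When $L$ is a knot this forces either no arc or every arc to be $c_i$; since $\chi$ is non-trivial, $c_i$ is not used at all, so $\chi$ already colors $D$ with $S\setminus\{c_i\}$, proving that $S\setminus\{c_i\}$ is sufficient. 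In every case, once $S\setminus\{c_i\}$ is shown sufficient, its cardinality $n-1$ bounds $mincol_p\,L\leq n-1<n$ by Definition \ref{def:mincol}, giving the inequality in statement $2$.

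The hard part will be the remaining configuration allowed by the structural observation for links of several components: whole components colored monochromatically $c_i$ that pass \emph{over} the others at bichromatic crossings (note that $c_i$ may legitimately appear as an over-arc, since $c_i\in\{a,c\}$ together with $2c_i=a+c$ would force $a=c$, so the $j=i$ case of the hypothesis is vacuous). Here $c_i$ genuinely occurs, so a color must be removed by changing the diagram rather than by reading off $\chi$: a naive recoloring of such a component to a color $c''\neq c_i$ fails, because preserving each over-crossing relation $2c''=a+c=2c_i$ would force $c''=c_i$, and an affine relabeling from Proposition \ref{prop:colautoform} cannot selectively delete a single color. The plan is to invoke the Reidemeister-move color-reduction technique of \cite{KauffmanChicago2004} and \cite{Saito}: since no non-$c_i$ component ever crosses over a $c_i$-component, the monochromatic sublink lies entirely on top of the rest and can be detoured by Reidemeister II and III moves so that, after a local reassignment, the color $c_i$ is expelled while the colors of the other strands — all in $S\setminus\{c_i\}$ — are retained, producing a diagram of $L$ colored by $S\setminus\{c_i\}$. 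Isolating and carrying out this diagrammatic step rigorously is the principal difficulty; the knot case above is the clean special case that already covers the applications of Section \ref{sect:11-13}.
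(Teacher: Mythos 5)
Your treatment of statement $1$ matches the paper's: lift the crossing relations to the integers and conclude that every prime would divide $\det L$, contradicting $\det L\neq 0$. Your structural observation for statement $2$ --- that an arc colored $c_i$ can be an under-arc only at a monochromatic crossing, so the $c_i$-colored arcs close up into whole components that never pass under a differently colored arc --- is also the right first step, and it does dispose of the knot case.

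The gap is in the multi-component case, which you flag as ``the principal difficulty'' and propose to settle by a Reidemeister-move detour that you do not carry out. No such diagrammatic argument is needed, and the one you sketch is not viable in general (as you yourself observe, a monochromatic over-lying sublink cannot be recolored without violating the relations $2c''=a+c$ at its over-crossings). The idea you are missing is that the configuration you are worried about simply cannot occur under the standing hypothesis: if the $c_i$-colored components form a non-empty proper subset of the components of $L$ and only ever cross \emph{over} the remaining components, then that sublink can be lifted off the rest, so $L$ is a split link --- and split links have determinant $0$ (see the Remark following Corollary \ref{cor:preliminaries}), contradicting $\det L\neq 0$. This is exactly how the paper closes statement $2$. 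Consequently, in every admissible case the color $c_i$ is not used by the given coloring at all, $S\setminus\{c_i\}$ already colors the same diagram, and $mincol_p\, L\leq n-1<n$ follows from Definition \ref{def:mincol}.
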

\begin{proof}
\begin{enumerate}
\item If for all $i$'s each of the  expressions $2c_i=c_i^1+c_i^2$ makes sense without considering it mod $2k+1$, then we have colorings whose coloring conditions hold modulo any prime. But this conflicts with the $\det L \neq 0$ hypothesis which implies that $\det L$ admits only a finite number of prime divisors.
\item Since $c_i$ cannot be the color of an under-arc at a polichromatic crossing, then this implies
that $L$ is a split link such that one or more link components are colored with $c_i$
alone, which conflicts with the hypothesis in the statement, that links have non-null determinant.
\end{enumerate}
This completes the proof.
\end{proof}

\noindent

\section{Removing Colors From Non-Trivial $11$-Colorings (Knots $10_{125}$, $10_{128}$, and $10_{152}$) and From Non-Trivial $13$-Colorings (Knots $6_3$, $7_3$, $8_1$, $9_{43}$, and $10_{154}$).}\label{sect:11-13}
\noindent
In this Section we apply  the obstruction results of Section \ref{sect:obst}: Theorem \ref{thm:lessk} and Corollary \ref{cor:s-consec}, and Theorem \ref{thm:kauffmansaito}. We remark that these are examples where this procedure yields results but that it may not work in general. The original diagrams are adapted from \cite{Rolfsen}, except for the original diagram for $8_1$ which is a braid closure whose braid word was obtained in \cite{VJones}. We obtain the following results.
\[
5 = mincol_{11}10_{125} = mincol_{11}10_{128}  = mincol_{11}10_{152}  \qquad (11 = \det 10_{125} = \det 10_{128} = \det 10_{152}).
\]
\begin{align*}
&5 = mincol_{13}6_{3} = mincol_{13}7_{3} = mincol_{13} 8_1 = mincol_{13} 9_{43} = mincol_{13}10_{154} \\
&(13 = \det 6_{3} = \det 7_{3} = \det 8_{1} = \det 9_{43} = \det 10_{154}).
\end{align*}

\begin{figure}[!ht]
	\psfrag{0}{\huge$0$}
	\psfrag{1}{\huge$1$}
	\psfrag{2}{\huge$2$}
	\psfrag{3}{\huge$3$}
	\psfrag{4}{\huge$4$}
	\psfrag{5}{\huge$5$}
	\psfrag{6}{\huge$6$}
	\psfrag{8}{\huge$8$}
	\psfrag{10}{\huge$10$}
	\psfrag{11}{\huge$\mathbf{11}$}
	\centerline{\scalebox{.33}{\includegraphics{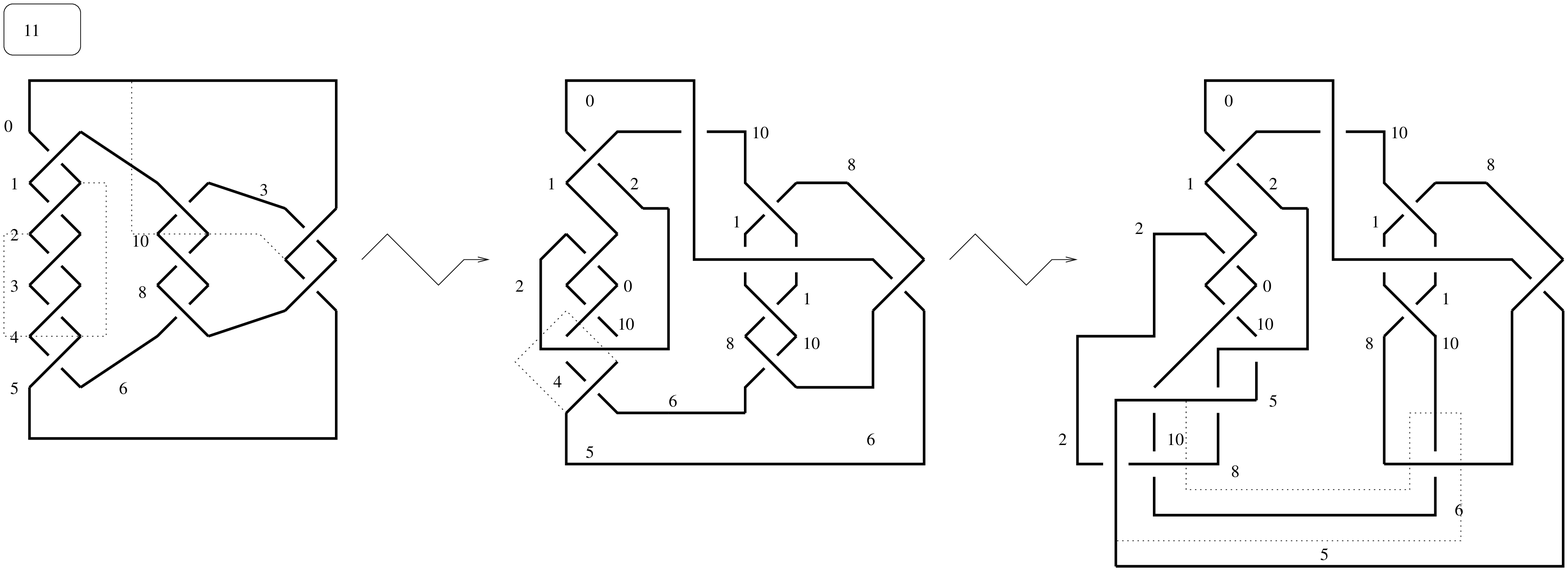}}}
	\caption{The knot $10_{125}$ whose determinant is $11$.   The dotted lines indicate the move that will take to the next diagram. The sequence of Figures \ref{fig:10-125}, \ref{fig:10-125cont}, and \ref{fig:10-125contcont} shows that $mincol_{11}\, 10_{125} = 5$ along with the set of $5$ colors mod $11$, $\{ 0, 1, 5, 8, 10 \}$, that color the final diagram.}\label{fig:10-125}
\end{figure}
\begin{figure}[!ht]
	\psfrag{0}{\huge$0$}
	\psfrag{1}{\huge$1$}
	\psfrag{2}{\huge$2$}
	\psfrag{3}{\huge$3$}
	\psfrag{4}{\huge$4$}
	\psfrag{5}{\huge$5$}
	\psfrag{6}{\huge$6$}
	\psfrag{8}{\huge$8$}
	\psfrag{10}{\huge$10$}
	\psfrag{11}{\huge$\mathbf{11}$}
	\centerline{\scalebox{.33}{\includegraphics{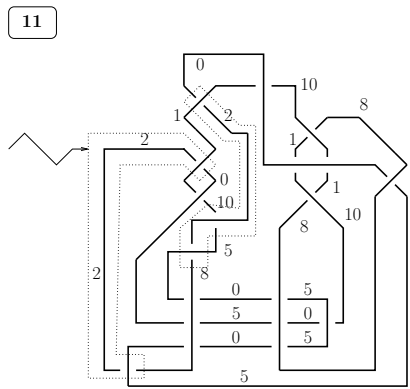}}}
	\caption{The knot $10_{125}$ whose determinant is $11$.   Continued from Figure \ref{fig:10-125}. The regions boxed with dotted lines are treated in Figure \ref{fig:10-125contcont}.}\label{fig:10-125cont}
\end{figure}
\begin{figure}[!ht]
	\psfrag{0}{\huge$0$}
	\psfrag{1}{\huge$1$}
	\psfrag{2}{\huge$2$}
	\psfrag{3}{\huge$3$}
	\psfrag{4}{\huge$4$}
	\psfrag{5}{\huge$5$}
	\psfrag{6}{\huge$6$}
	\psfrag{8}{\huge$8$}
	\psfrag{10}{\huge$10$}
	\psfrag{11}{\huge$\mathbf{11}$}
	\centerline{\scalebox{.33}{\includegraphics{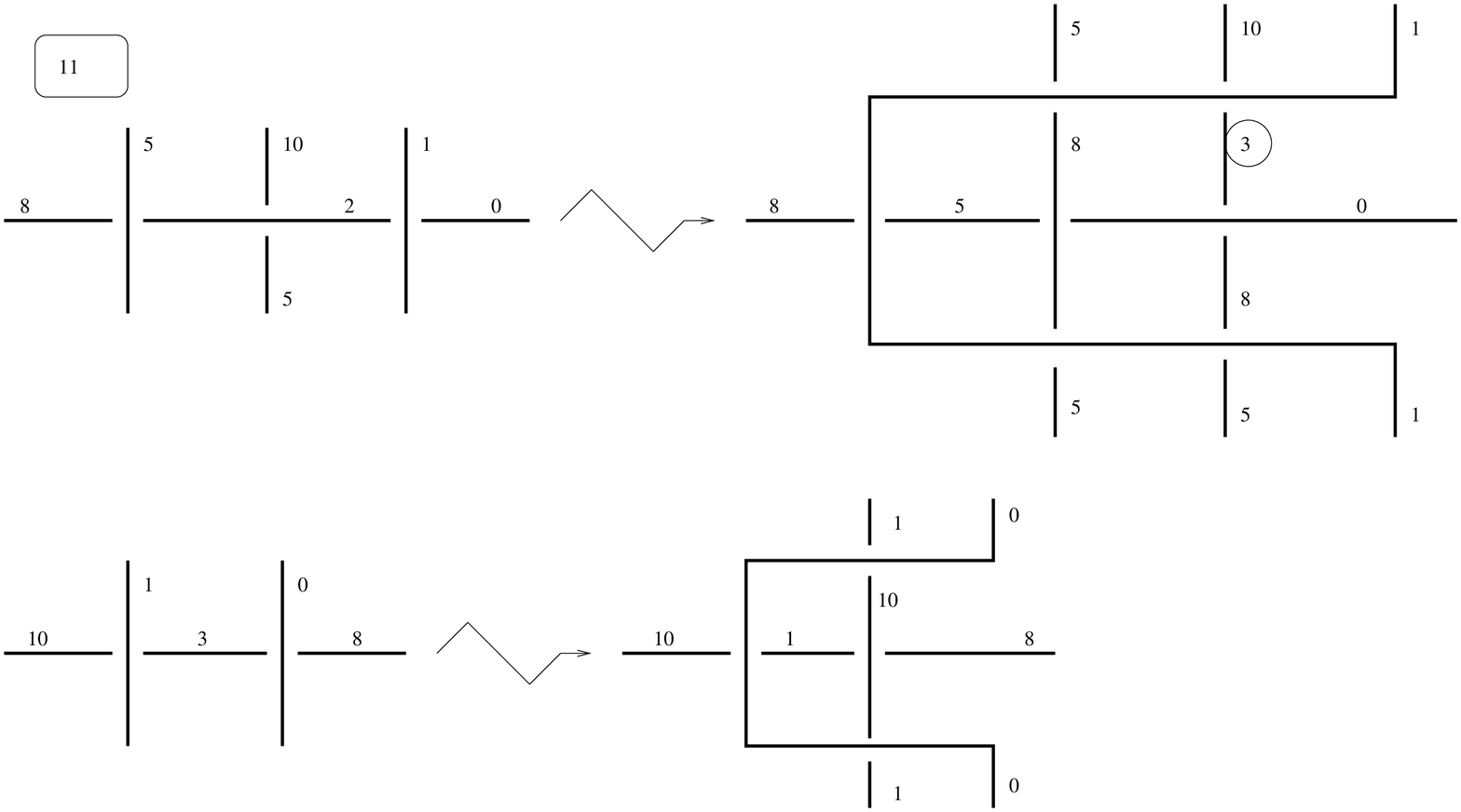}}}
	\caption{The knot $10_{125}$ whose determinant is $11$, conclusion.   The circle around the $3$ tells us it still has to be removed; this is done below in this Figure. With this we accomplish coloring $10_{125}$ mod $11$ with $5$ colors, $\{ 0, 1, 5, 8, 10 \}$. We refrain from encorporating these adjustments  into Figure \ref{fig:10-125cont} in order not to over-burden things. We note that the $11$-coloring automorphism $f(x)=8x$ maps $\{ 0, 1, 2, 4, 7 \}$ onto $\{ 0, 1, 5, 8, 10 \}$; the $11$-coloring automorphism $g(x)=2x$ maps $\{ 0, 1, 2, 4, 7 \}$ onto $\{ 0, 2, 3, 4, 8 \}$.}\label{fig:10-125contcont}
\end{figure}
\begin{figure}[!ht]
	\psfrag{0}{\huge$0$}
	\psfrag{1}{\huge$1$}
	\psfrag{2}{\huge$2$}
	\psfrag{3}{\huge$3$}
	\psfrag{4}{\huge$4$}
	\psfrag{5}{\huge$5$}
	\psfrag{6}{\huge$6$}
	\psfrag{8}{\huge$8$}
	\psfrag{9}{\huge$9$}
	\psfrag{10}{\huge$10$}
	\psfrag{11}{\huge$\mathbf{11}$}
	\centerline{\scalebox{.33}{\includegraphics{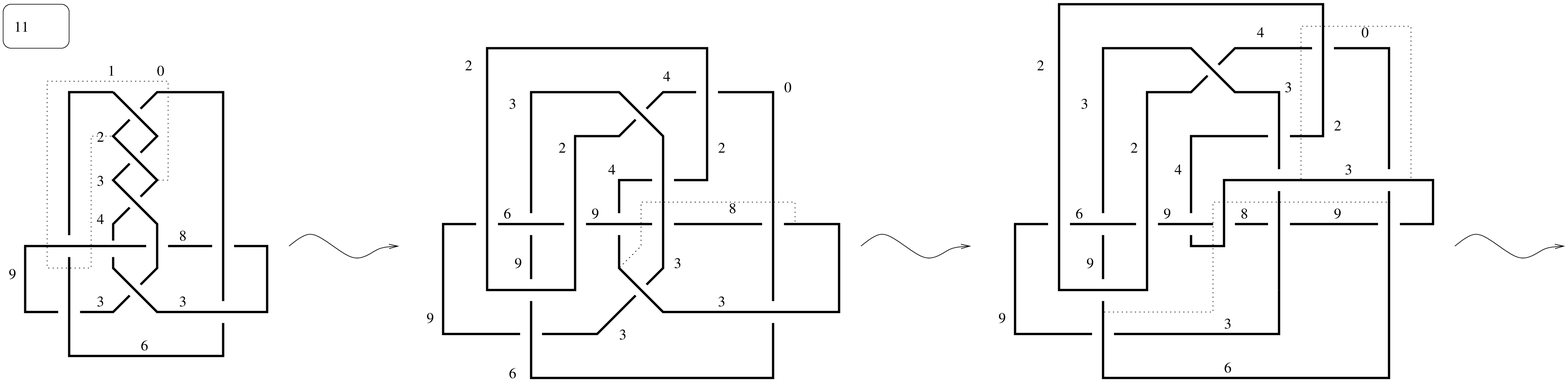}}}
	\caption{The knot $10_{128}$ whose determinant is $11$.   The dotted lines indicate the move that will take to the next diagram. }\label{fig:10-128}
\end{figure}
\begin{figure}[!ht]
	\psfrag{0}{\huge$0$}
	\psfrag{1}{\huge$1$}
	\psfrag{2}{\huge$2$}
	\psfrag{3}{\huge$3$}
	\psfrag{4}{\huge$4$}
	\psfrag{5}{\huge$5$}
	\psfrag{6}{\huge$6$}
	\psfrag{8}{\huge$8$}
	\psfrag{9}{\huge$9$}
	\psfrag{10}{\huge$10$}
	\psfrag{11}{\huge$\mathbf{11}$}
	\centerline{\scalebox{.33}{\includegraphics{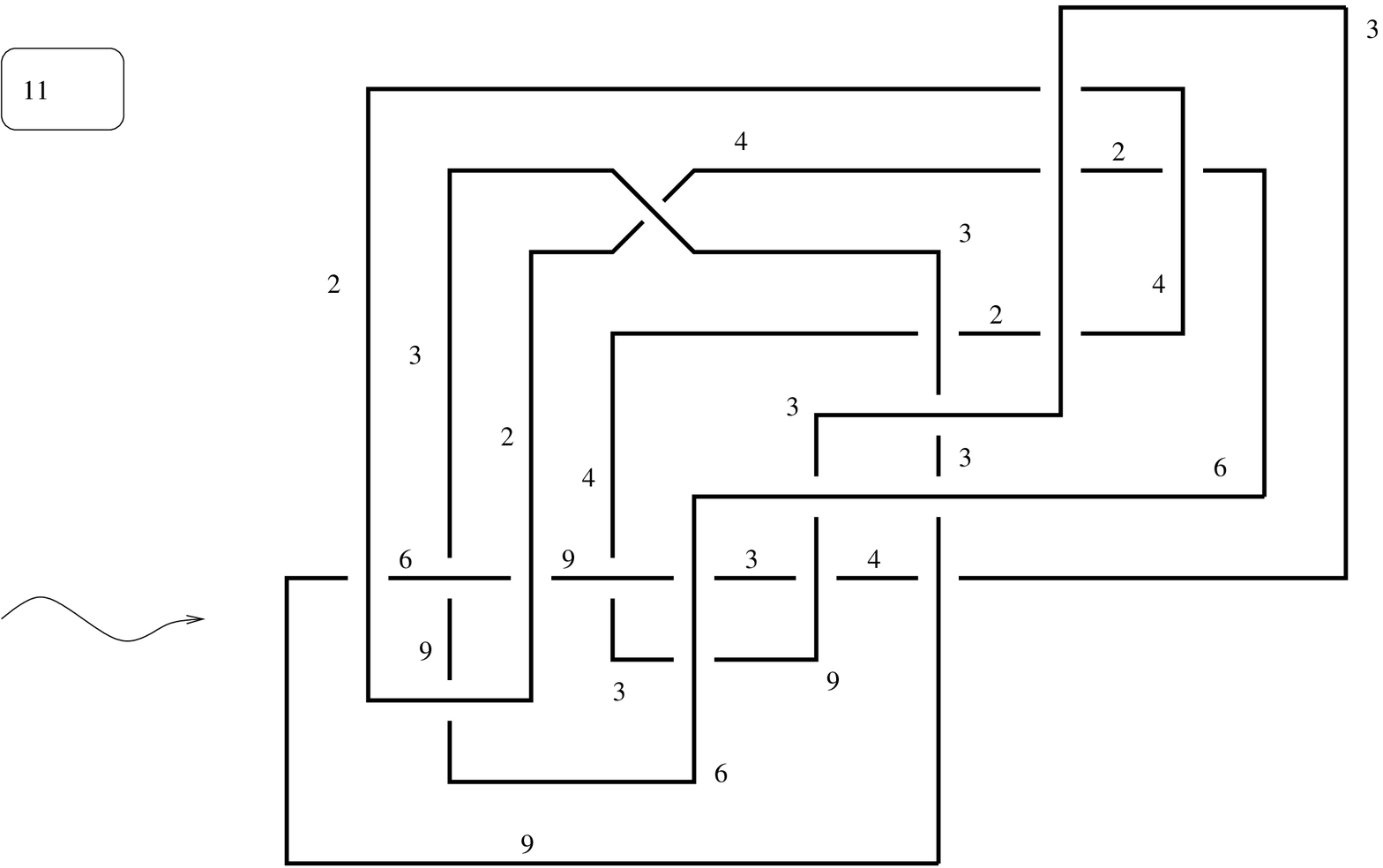}}}
	\caption{The knot $10_{128}$ whose determinant is $11$, conclusion. An $11$-coloring using colors $\{ 2, 3, 4, 6, 9  \}$ is obtained. Thus $mincol_{11}\, 10_{128}=5$. The $11$-coloring automorphism $f(x)=2x-4$ takes this set to $\{ 0, 2, 3, 4, 8 \}$.}\label{fig:10-128concl}
\end{figure}
\begin{figure}[!ht]
	\psfrag{0}{\huge$0$}
	\psfrag{1}{\huge$1$}
	\psfrag{2}{\huge$2$}
	\psfrag{3}{\huge$3$}
	\psfrag{4}{\huge$4$}
	\psfrag{5}{\huge$5$}
	\psfrag{6}{\huge$6$}
	\psfrag{8}{\huge$8$}
	\psfrag{10}{\huge$10$}
	\psfrag{11}{\huge$\mathbf{11}$}
	\centerline{\scalebox{.33}{\includegraphics{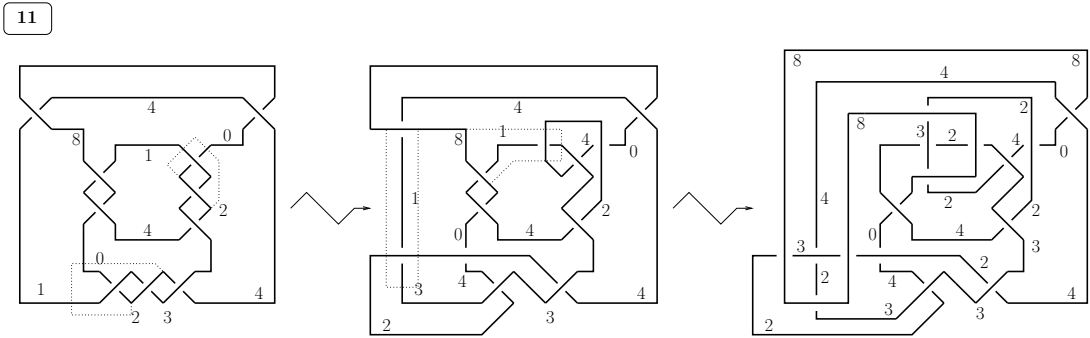}}}
	\caption{The knot $10_{152}$ whose determinant is $11$.  The dotted lines indicate the move that will take to the next diagram. A non-trivial $11$-coloring of $10_{152}$ is obtained with colors $\{ 0, 2, 3, 4, 8\}$. Thus $mincol_{11}\, 10_{152}=5$. The $11$-coloring automorphism $f(x)=2x$ maps the set $\{ 0, 1, 2, 4, 7\}$ to the set $\{ 0, 2, 3, 4, 8 \}$.}\label{fig:10-152}
\end{figure}

\begin{figure}[!ht]
	\psfrag{0}{\huge$0$}
	\psfrag{1}{\huge$1$}
	\psfrag{2}{\huge$2$}
	\psfrag{3}{\huge$3$}
	\psfrag{4}{\huge$4$}
	\psfrag{8}{\huge$8$}
	\psfrag{6}{\huge$6$}
	\psfrag{11}{\huge$11$}
	\psfrag{13}{\huge$\mathbf{13}$}
	\centerline{\scalebox{.33}{\includegraphics{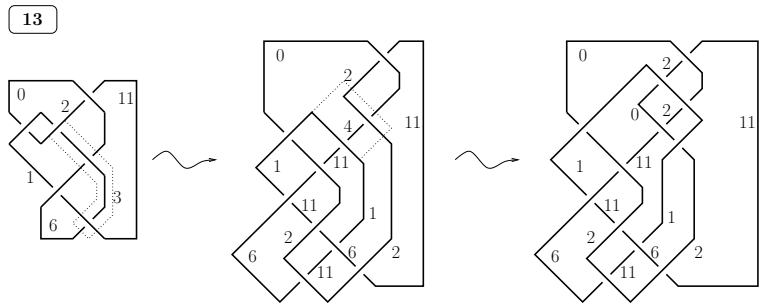}}}
	\caption{The knot $6_3$ whose determinant is $13$.   On the left-hand side, a diagram with minimum number of crossings equipped with a non-trivial $13$-coloring; the dotted lines indicate the move that will take to the diagram on the right-hand side. On the right-hand side a diagram equipped with a non-trivial $13$-coloring using a minimum number of colors modulo $13$: $\{ 0, 1, 2, 6, 11 \}$. Thus $mincol_{13}\, 6_{3}=5$.}\label{fig:6-3}
\end{figure}
\begin{figure}[!ht]
	\psfrag{0}{\huge$0$}
	\psfrag{1}{\huge$1$}
	\psfrag{2}{\huge$2$}
	\psfrag{3}{\huge$3$}
	\psfrag{4}{\huge$4$}
	\psfrag{5}{\huge$5$}
	\psfrag{9}{\huge$9$}
	\psfrag{11}{\huge$11$}
	\psfrag{13}{\huge$\mathbf{13}$}
	\centerline{\scalebox{.33}{\includegraphics{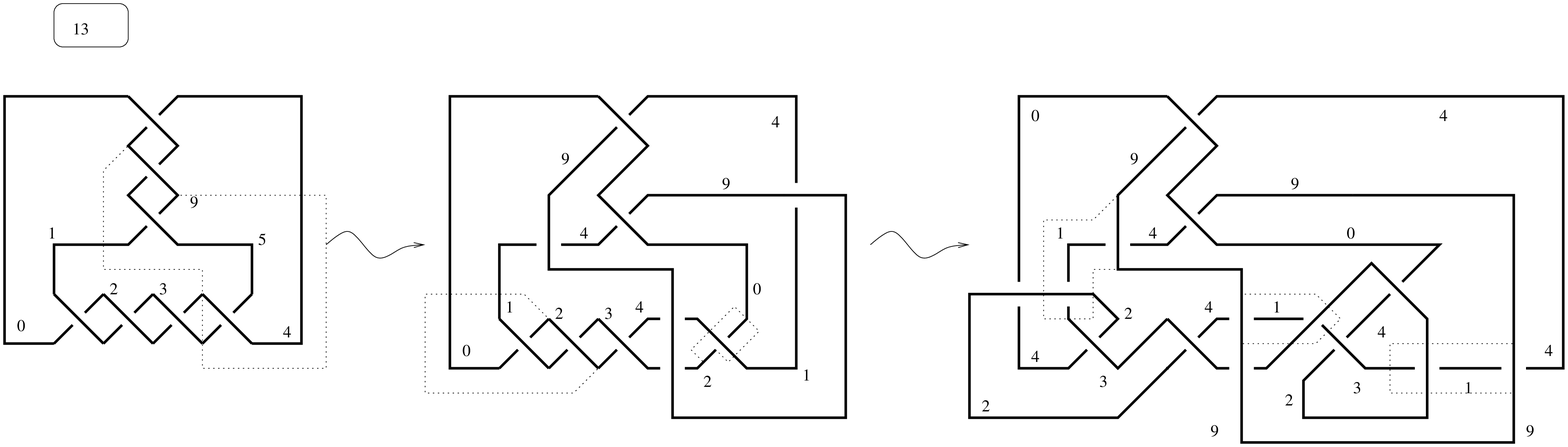}}}
	\caption{The knot $7_3$ whose determinant is $13$.   On the left-hand side, a diagram with minimum number of crossings equipped with a non-trivial $13$-coloring; the dotted lines indicate the move that will take to the next diagram.}\label{fig:7-3}
\end{figure}
\begin{figure}[!ht]
	\psfrag{0}{\huge$0$}
	\psfrag{1}{\huge$1$}
	\psfrag{2}{\huge$2$}
	\psfrag{3}{\huge$3$}
	\psfrag{4}{\huge$4$}
	\psfrag{5}{\huge$5$}
	\psfrag{9}{\huge$9$}
	\psfrag{11}{\huge$11$}
	\psfrag{13}{\huge$\mathbf{13}$}
	\centerline{\scalebox{.33}{\includegraphics{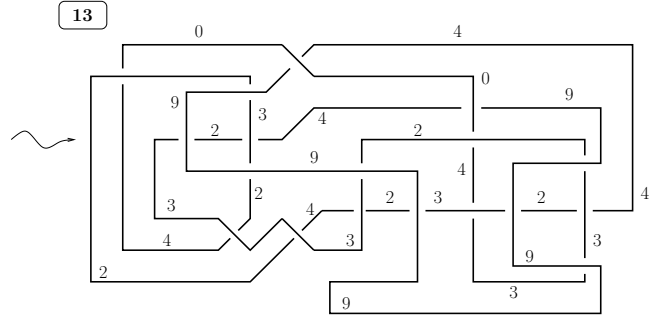}}}
	\caption{The knot $7_3$ whose determinant is $13$, conclusion.   A non-trivial $13$-coloring of $7_3$ here with colors $\{ 0, 2, 3, 4, 9 \}$ is obtained. Thus $mincol_{13}\, 7_{3}=5$. Also, the $13$-coloring automorphism $f(x)=6x+3$ maps the set of colors  $\{ 0, 1, 2, 6, 11 \}$ to the set $\{ 0, 2, 3, 4, 9 \}$. }\label{fig:7-3concl}
\end{figure}
\begin{figure}[!ht]
	\psfrag{0}{\huge$0$}
	\psfrag{1}{\huge$1$}
	\psfrag{2}{\huge$2$}
	\psfrag{3}{\huge$3$}
	\psfrag{4}{\huge$4$}
	\psfrag{5}{\huge$5$}
	\psfrag{10}{\huge$10$}
	\psfrag{13}{\huge$\mathbf{13}$}
	\centerline{\scalebox{.33}{\includegraphics{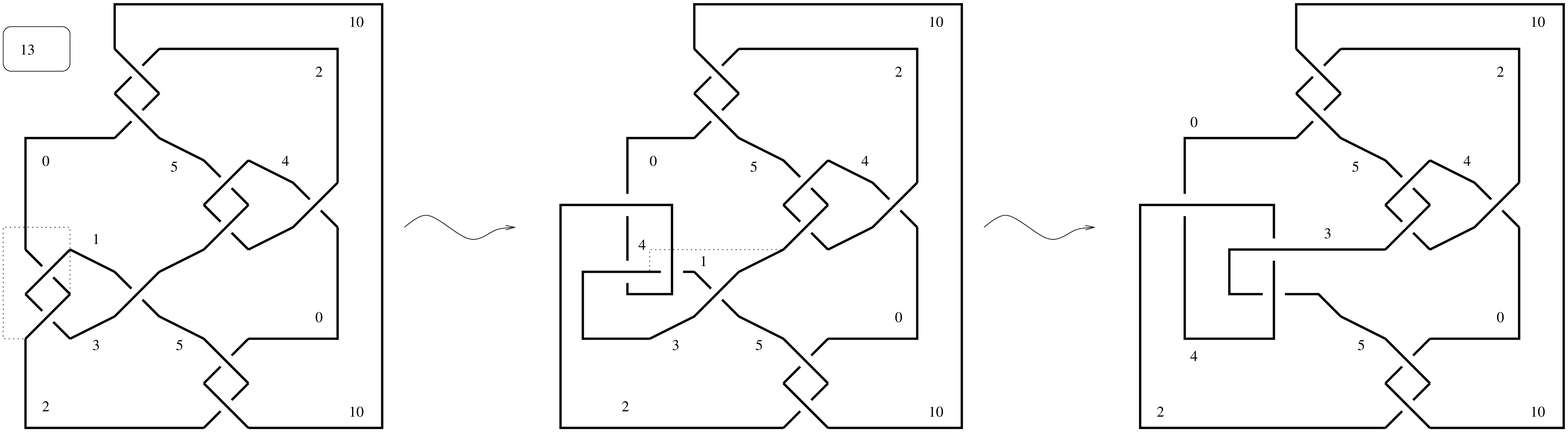}}}
	\caption{The knot $10_{154}$ whose determinant is $13$.   A minimum number of $5$ colors is needed to obtain a non-trivial $13$-coloring of $10_{154}$. In Figure \ref{fig:10-154cont} we show how to remove color $2$ thus showing $10_{154}$ can be colored with a minimum of $5$ colors mod $13$, $\{ 0, 3, 4, 5, 10 \}$. }\label{fig:10-154}
\end{figure}
\begin{figure}[!ht]
	\psfrag{0}{\huge$0$}
	\psfrag{1}{\huge$1$}
	\psfrag{2}{\huge$2$}
	\psfrag{3}{\huge$3$}
	\psfrag{4}{\huge$4$}
	\psfrag{5}{\huge$5$}
	\psfrag{7}{\huge$7$}
	\psfrag{10}{\huge$10$}
	\psfrag{13}{\huge$\mathbf{13}$}
	\centerline{\scalebox{.33}{\includegraphics{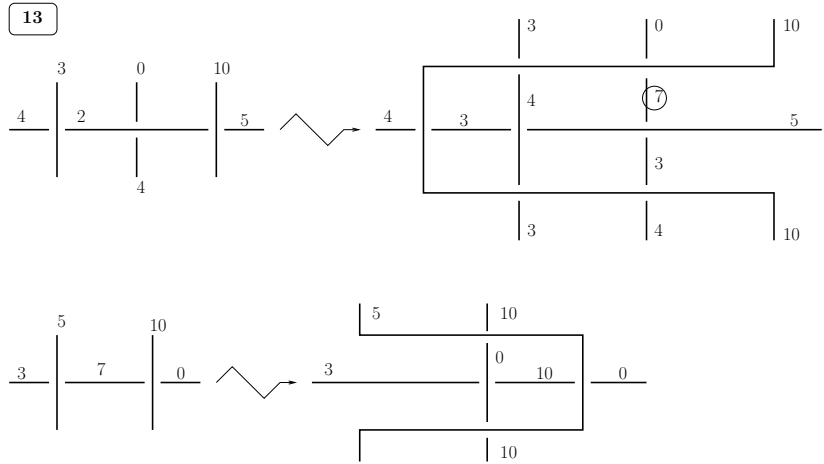}}}
	\caption{The knot $10_{154}$ whose determinant is $13$, conclusion.   The circle around the $7$ tells us it still has to be removed; this is done below in this figure. With this we accomplish coloring $10_{154}$ mod $13$ with $5$ colors, $\{ 0, 3, 4, 5, 10 \}$. Thus $mincol_{13}\, 10_{154}=5$. We refrain form encorporating the adjustments in this figure in Figure \ref{fig:10-154} in order not to over-burden things. Also, the $13$-coloring automorphism $f(x)=5x$ maps the set of colors $\{ 0, 1, 2, 6, 11 \}$ to the set  $\{ 0, 3, 4, 5, 10 \}$. }\label{fig:10-154cont}
\end{figure}

\begin{figure}[!ht]
	\psfrag{0}{\huge$0$}
	\psfrag{1}{\huge$1$}
	\psfrag{2}{\huge$2$}
	\psfrag{3}{\huge$3$}
	\psfrag{4}{\huge$4$}
	\psfrag{5}{\huge$5$}
	\psfrag{7}{\huge$7$}
	\psfrag{6}{\huge$6$}
	\psfrag{8}{\huge$8$}
	\psfrag{10}{\huge$10$}
	\psfrag{11}{\huge$11$}
	\psfrag{12}{\huge$12$}
	\psfrag{13}{\huge$\mathbf{13}$}
	\centerline{\scalebox{.35}{\includegraphics{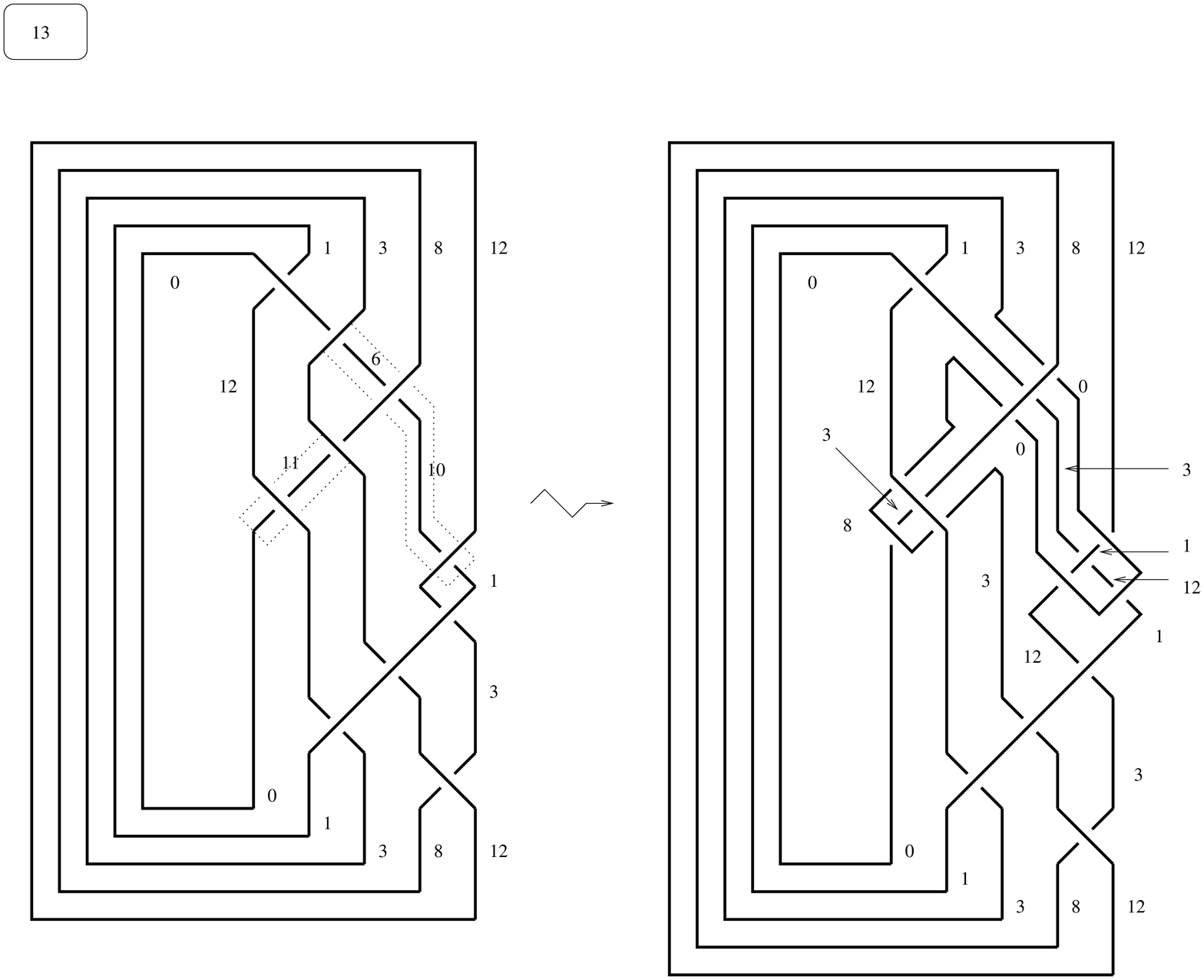}}}
	\caption{The knot $8_1$ whose determinant is $13$.    The dotted lines indicate the move that will take to the next diagram. Modulo $13$ we are able to color $8_1$ with the following $5$ colors: $\{ 0, 1, 3, 8, 12 \}$. Thus $mincol_{13}\, 8_{1}=5$. $f(x)=-x+1$, mod $13$, maps $\{ 0, 1, 3, 8, 12 \}$ into $\{ 0, 1, 2, 6, 11  \}$.}\label{fig:8-1new}
\end{figure}

\begin{figure}[!ht]
	\psfrag{0}{\huge$0$}
	\psfrag{1}{\huge$1$}
	\psfrag{2}{\huge$2$}
	\psfrag{3}{\huge$3$}
	\psfrag{4}{\huge$4$}
	\psfrag{5}{\huge$5$}
	\psfrag{7}{\huge$7$}
	\psfrag{6}{\huge$6$}
	\psfrag{8}{\huge$8$}
	\psfrag{11}{\huge$11$}
	\psfrag{12}{\huge$12$}
	\psfrag{13}{\huge$\mathbf{13}$}
	\centerline{\scalebox{.3}{\includegraphics{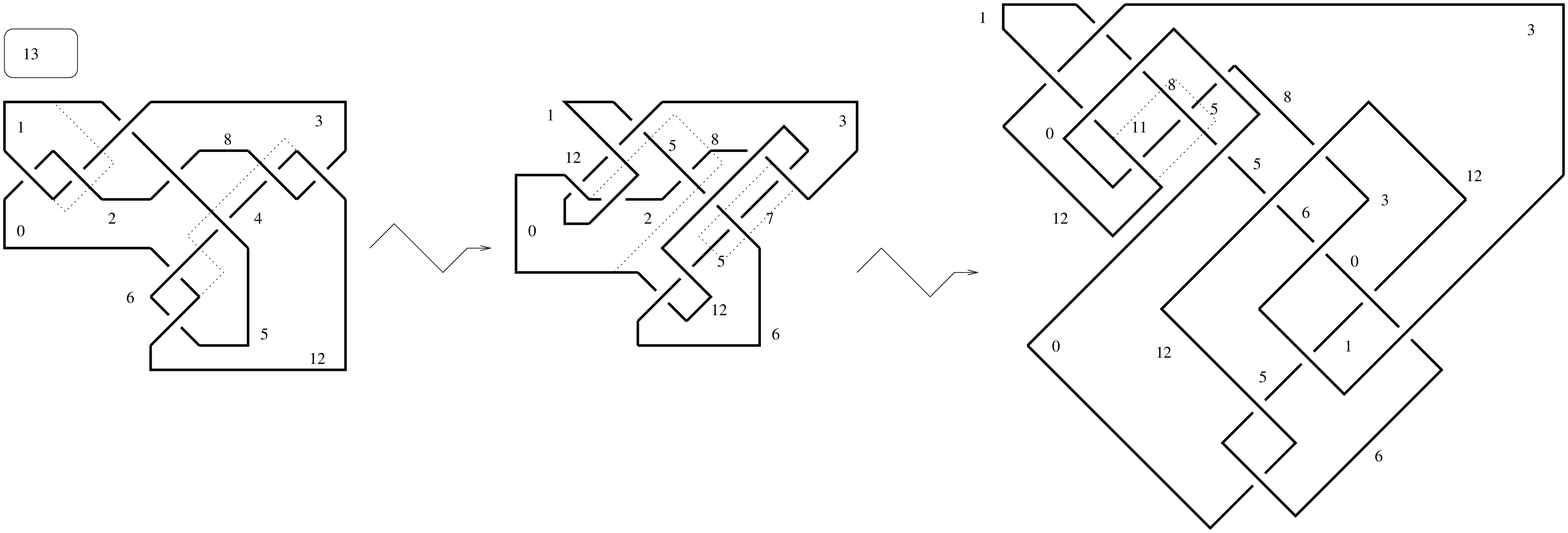}}}
	\caption{The knot $9_{43}$ whose determinant is $13$.    The dotted lines indicate the move that will take to the next diagram. }\label{fig:9-43}
\end{figure}
\begin{figure}[!ht]
	\psfrag{0}{\huge$0$}
	\psfrag{1}{\huge$1$}
	\psfrag{2}{\huge$2$}
	\psfrag{3}{\huge$3$}
	\psfrag{4}{\huge$4$}
	\psfrag{5}{\huge$5$}
	\psfrag{7}{\huge$7$}
	\psfrag{6}{\huge$6$}
	\psfrag{8}{\huge$8$}
	\psfrag{11}{\huge$11$}
	\psfrag{12}{\huge$12$}
	\psfrag{13}{\huge$\mathbf{13}$}
	\centerline{\scalebox{.27}{\includegraphics{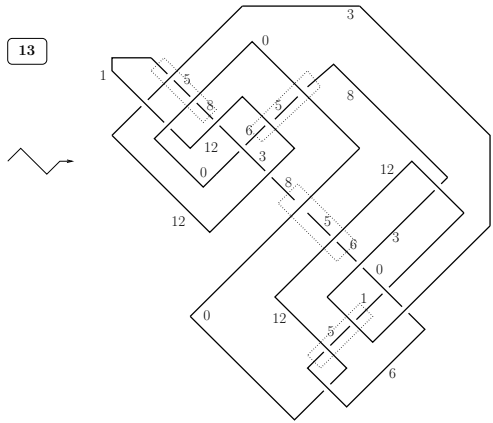}}}
	\caption{The knot $9_{43}$ whose determinant is $13$. The boxed regions involve color $5$ or colors $5$ and $6$; there are three distinct cases. In  Figures \ref{fig:9-43I}, \ref{fig:9-43IIv2}, \ref{fig:9-43IIv2b}, and \ref{fig:9-43IIIv2} we show how to eliminate them. After eliminating these colors we end up with a non-trivial $13$-coloring of $9_{43}$ using colors $0, 1, 3, 8, 12$ mod $13$. Hence $mincol_{13}\,9_{43}=5$.}\label{fig:9-43cont}
\end{figure}
\begin{figure}[!ht]
	\psfrag{0}{\huge$0$}
	\psfrag{1}{\huge$1$}
	\psfrag{2}{\huge$2$}
	\psfrag{3}{\huge$3$}
	\psfrag{4}{\huge$4$}
	\psfrag{5}{\huge$5$}
	\psfrag{7}{\huge$7$}
	\psfrag{6}{\huge$6$}
	\psfrag{8}{\huge$8$}
	\psfrag{10}{\huge$10$}
	\psfrag{12}{\huge$12$}
	\psfrag{13}{\huge$\mathbf{13}$}
	\centerline{\scalebox{.3}{\includegraphics{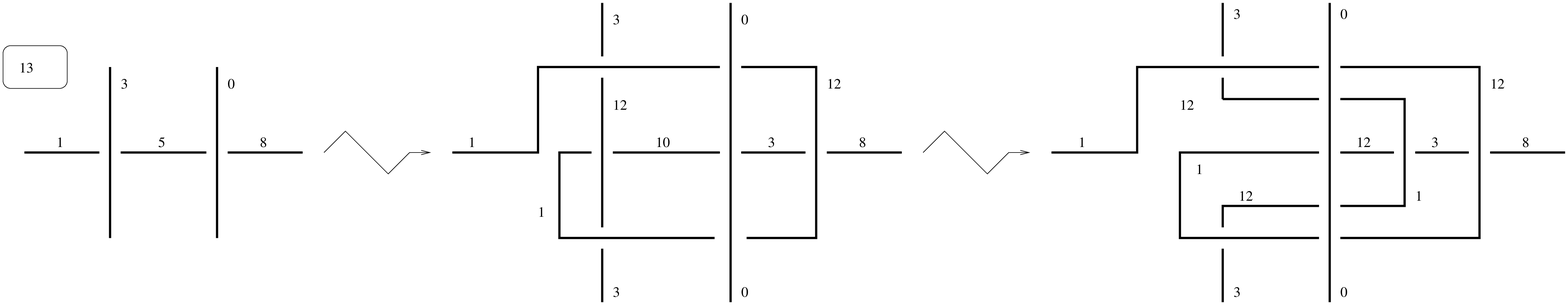}}}
	\caption{The knot $9_{43}$ whose determinant is $13$. Taking care of the $5$ in one of the boxed regions in Figure \ref{fig:9-43cont}; first case.}\label{fig:9-43I}
\end{figure}
\begin{figure}[!ht]
	\psfrag{0}{\huge$0$}
	\psfrag{1}{\huge$1$}
	\psfrag{2}{\huge$2$}
	\psfrag{3}{\huge$3$}
	\psfrag{4}{\huge$4$}
	\psfrag{5}{\huge$5$}
	\psfrag{7}{\huge$7$}
	\psfrag{6}{\huge$6$}
	\psfrag{8}{\huge$8$}
	\psfrag{10}{\huge$10$}
	\psfrag{12}{\huge$12$}
	\psfrag{13}{\huge$\mathbf{13}$}
	\centerline{\scalebox{.25}{\includegraphics{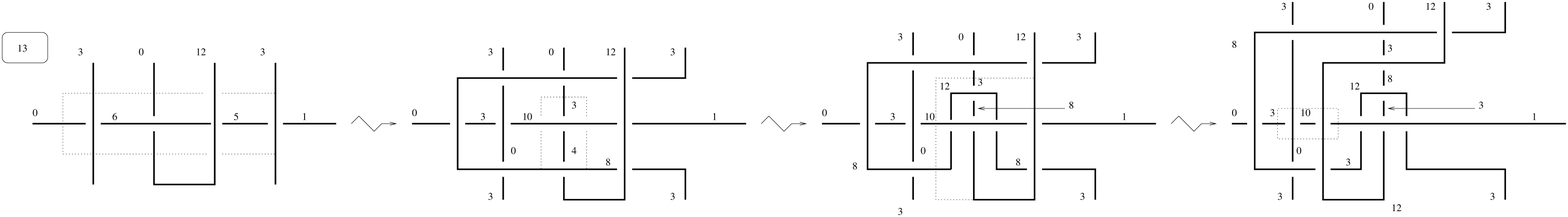}}}
	\caption{The knot $9_{43}$ whose determinant is $13$. Taking care of the $5$ in one of the boxed regions in Figure \ref{fig:9-43cont}; second case.}\label{fig:9-43IIv2}
\end{figure}
\begin{figure}[!ht]
	\psfrag{0}{\huge$0$}
	\psfrag{1}{\huge$1$}
	\psfrag{2}{\huge$2$}
	\psfrag{3}{\huge$3$}
	\psfrag{4}{\huge$4$}
	\psfrag{5}{\huge$5$}
	\psfrag{7}{\huge$7$}
	\psfrag{6}{\huge$6$}
	\psfrag{8}{\huge$8$}
	\psfrag{10}{\huge$10$}
	\psfrag{12}{\huge$12$}
	\psfrag{13}{\huge$\mathbf{13}$}
	\centerline{\scalebox{.3}{\includegraphics{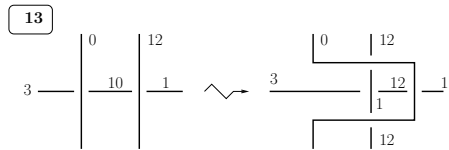}}}
	\caption{The knot $9_{43}$ whose determinant is $13$. Taking care of the $10$ in the boxed region in the right-most diagram in Figure \ref{fig:9-43IIv2}.}\label{fig:9-43IIv2b}
\end{figure}
\begin{figure}[!ht]
	\psfrag{0}{\huge$0$}
	\psfrag{1}{\huge$1$}
	\psfrag{2}{\huge$2$}
	\psfrag{3}{\huge$3$}
	\psfrag{4}{\huge$4$}
	\psfrag{5}{\huge$5$}
	\psfrag{7}{\huge$7$}
	\psfrag{6}{\huge$6$}
	\psfrag{8}{\huge$8$}
	\psfrag{11}{\huge$11$}
	\psfrag{12}{\huge$12$}
	\psfrag{13}{\huge$\mathbf{13}$}
	\centerline{\scalebox{.3}{\includegraphics{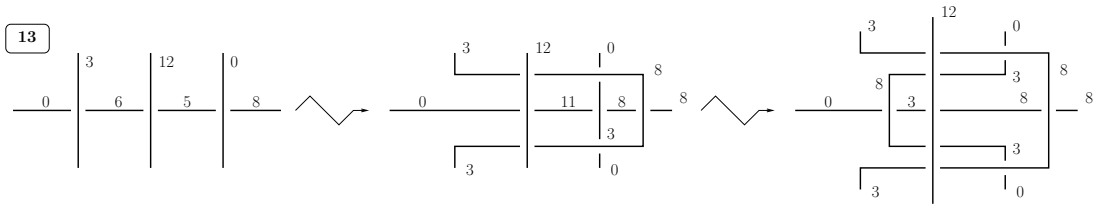}}}
	\caption{The knot $9_{43}$ whose determinant is $13$. Taking care of the $5$ and the $6$ in one of the boxed regions in Figure \ref{fig:9-43cont}; third case. We refrain from including these treatments in the diagram of Figure \ref{fig:9-43cont} in order not to overburden it. This concludes our attempt at minimizing the number of colors for a non-trivial $13$-coloring of $9_{43}$: we did it with $5$ colors, $\{ 0, 1, 3, 8, 12  \}$.}\label{fig:9-43IIIv2}
\end{figure}

\section{Directions for Future Work}\label{sect:future}

\noindent

We distinguish three main topics to be looked into: $1.$ $11$-minimal sufficient set of colors; $2.$ minimum number of colors; $3.$ Procedure for reducing the number of colors.

\begin{enumerate}
\item We showed that there is no universal $11$-minimal sufficient set of colors leaning on the fact that there are diagrams of $6_2$ and $7_2$ which support minimal $11$-colorings but whose $11$-minimal sufficient sets of colors cannot be mapped into each other via an $11$-coloring automorphism. The following questions seem to be in order at this point. Do there exist two diagrams, one for each of these knots, which could be colored by the same $11$-minimal sufficient set of colors? Could there be Reidemeister moves followed by local rearrangement of colors which would take the diagram of $6_2$ in the right-hand side of Figure \ref{fig:6-2} (colored with $\{ 0, 2, 3, 4, 8 \}$) into a diagram colored with $\{ 0, 3, 4, 5, 6 \}$? What about for other prime moduli $p > 11$?

\item Given a prime $p(=2k+1)$, if there are links $L$ and $L'$ such that $mincol_p\, L \neq mincol_p\, L'$, what is the range of the function $mincol_p (\dots)$? Is there a link $L$ such that $mincol_p\, L > k$?

\item This procedure is as follows. Given a prime $p$, and given a diagram $D$, endowed with a non-trivial $p$-coloring let $S = \{  c_1, \dots , c_n \}$ (mod $p$) be the set of colors used in the coloring. Use Theorem \ref{thm:lessk} and Corollary \ref{cor:s-consec} to list the candidates to $p$-Sufficient Sets of Colors of the form $S\setminus \{ c_i \}$. Use Theorem \ref{thm:kauffmansaito} to further screen this list. Use the list just obtained to guide in the choice of the next color to be eliminated from the coloring.

    We remark that this is a procedure and not an algorithm: conceivably certain diagrams equipped with non-trivial colorings may not allow a reduction of the number of colors by this procedure.

\end{enumerate}

We plan to look into these issues in the near future.

\end{document}